\newtheorem{thm}{Theorem}[section]
\newtheorem*{re}{Remark}
\newtheorem{defn}[thm]{Definition}
\newcommand{\ord}{\mbox{ord}}
\newcommand{\C}{\mathbb{C}}
\newcommand{\Z}{\mathbb{Z}}
\newcommand{\N}{\mathbb{N}}
\newcommand{\im}{\operatorname{Im}}
\newcommand{\SL}{\operatorname{SL}}
\DeclareSymbolFont{largesymbol}{OMX}{yhex}{m}{n}
\DeclareMathAccent{\Widehat}{\mathord}{largesymbol}{"62}
\numberwithin{equation}{section}
\title[primitive representations for even rank quadratic forms]{Eisenstein series part of the primitive representations for even rank quadratic forms}
\author{Ben Kane}
\address{Department of Mathematics, University of Hong Kong, Pokfulam, Hong Kong}
\email{bkane@hku.hk}
\author{Luhao Xue}
\address{Department of Mathematics, University of Hong Kong, Pokfulam, Hong Kong}
\email{u3577120@connect.hku.hk}
\date{\today}
\begin{document}
\begin{abstract}
In this paper, we first investigate the relationship between the number of primitive representations of $n$ by quadratic forms and the number of non-primitive ones. Recall that the generating function for the number of representations is a modular form, which naturally splits into an Eisenstein series, giving the main asymptotic contribution, and a cusp form, contributing an error term. We hence obtain a theorem to deal with the Eisenstein series part with quadratic Dirichlet character when deriving the formula for the number of primitive representations of an integer $n$ by even rank quadratic forms from the number of non-primitive ones. Formulas for special cases are given as examples.
\end{abstract}
\keywords{modular forms, Fourier coefficients, quadratic forms, Eisenstein series, M\"{o}bius Inversion}
\subjclass[2020]{11F11,11F27,11E20}

\thanks{The research of the first author was supported by grants from the Research Grants Council of the Hong Kong SAR, China (project numbers HKU 17303618 and 17314122).}
\maketitle

\section{Introduction}
Throughout this paper, we let $n \in \mathbb{N}$, $k \in \mathbb{N}$, $d \in \mathbb{N}$, $\bm{x}=(x_1,x_2,\cdots,x_k) \in \mathbb{Z}^k$, and $\bm{a}=(a_1,a_2,\cdots,a_k) \in \mathbb{N}^k$, where $\mathbb{N}$ is the set of natural numbers excluding 0. Meanwhile, we use $\mu$ to represent the \textit{m\"{o}bius function} and define the notation $\delta_{d|n}$ by letting
\begin{align*}
\delta_{d|n}=
\begin{cases}
1&\text{if $d\mid n$},\\
0&\text{if $d\nmid n$}.
\end{cases}
\end{align*}
We denote the number of \textbf{representations} of $n$ by the quadratic form $\sum_{i=1}^k{a_ix_i^2}$ by $r_{\bm a}(n)$. That is, $r_{\bm a}(n)$ is the number of solutions $\bm{x}\in\Z^k$ to the equation
\begin{equation}\label{eqn:nonpre}
\displaystyle\sum_{i=1}^{k}{a_ix_i^2}=n.
\end{equation}
In 1770, Lagrange proved that every natural number could be represented as the sum of four integer squares. In other words, the equation
\begin{equation}\label{eqn:foursquare}
x_1^2+x_2^2+x_3^2+x_4^2=n
\end{equation}
with $x_i\in\mathbb{Z}$ is solvable for every $n\in\mathbb{N}$ and hence $r_{(1,1,1,1)}(n)>0$. In 1834, Jacobi further extended this by explicitly determining the number of solutions to \eqref{eqn:foursquare}, yielding the formula
\begin{equation}\label{eqn:Jacobi}
r_{(1,1,1,1)}(n)=8\displaystyle\sum_{m\mid n,\,4\nmid m} m.
\end{equation}
Although every integer can be represented as a sum of four squares, not every integer is primitively represented, where a solution of the equation \eqref{eqn:nonpre} is called \textbf{primitive} if $\gcd(x_1,x_2,\cdots,x_n)=1$. In particular, if $8\mid n$, then no primitive representation exists, which can be verified by a simple argument involving congruences. In this paper, we are interested in formulas similar to \eqref{eqn:Jacobi} for the number $r_{\bm a}^p(n)$ of primitive solutions of the equation \eqref{eqn:nonpre}. In Section \ref{sec:example}, we derive an explicit formula 
\begin{equation}\label{eqn:prifour}
r^{p}_{\left(1,1,1,1\right)}(n)=8\cdot\left(1+\frac{1}{2}\delta_{2\mid n}-\delta_{4\mid n}-\frac{1}{2}\delta_{8\mid n}\right)\cdot n\cdot\displaystyle\prod_{2\ne p\mid n}\left(1+\frac{1}{p}\right)
\end{equation}
from Jacobi's result, where the product over $p$ runs over the odd prime factors of $n$. We will give a proof in Section \ref{sec:example}. Formulas resembling \eqref{eqn:prifour} may be obtained from the following result, which gives a natural inter-relationship between $r_{\bm{a}}^p(n)$ and $r_{\bm{a}}(n)$.
\begin{thm}\label{thm:relationship}
For any $k\in \mathbb{N}$, $d\in\mathbb{N}$, and $\bm{a} = (a_1,a_2,\cdots ,a_k) \in \mathbb{N}^k$, we have the following.
\begin{enumerate}[leftmargin=*,align=left,label={\rm(\arabic*)}]
\item The representation numbers $r_{\bm{a}}$ can be represented by a sum of primitive representation numbers $r^{p}_{\bm{a}}$ via
\begin{equation}\label{eqn:nonpri}
r_{\bm{a}}\left(n\right)=\displaystyle\sum_{d^2\mid n}{r^{p}_{\bm{a}}\left(\frac{n}{d^2}\right)},
\end{equation}
\item The primitive representation numbers $r_{\bm{a}}^p$ can be represented as a sum of the representation numbers $r_{\bm{a}}$ via 
\begin{equation}\label{eqn:prinon}
r_{\bm{a}}^{p}\left(n\right)=\displaystyle\sum_{d^2\mid n}{\mu(d)}{r_{\bm{a}}\left(\frac{n}{d^2}\right)}.
\end{equation}
\end{enumerate}
\end{thm}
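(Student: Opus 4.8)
The plan is to prove part (1) directly by sorting the solution set according to the greatest common divisor of the coordinates, and then to obtain part (2) from part (1) by a Möbius inversion carried out over the poset of square divisors of $n$.

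For (1), I would fix $n$ and attach to each solution $\bm{x}\in\Z^k$ of $\sum_{i=1}^k a_ix_i^2=n$ the quantity $g=\gcd(x_1,\dots,x_k)$. Writing $\bm{x}=g\bm{y}$ with $\gcd(y_1,\dots,y_k)=1$, we get $\sum_{i=1}^k a_iy_i^2=n/g^2$; in particular $g^2\mid n$ and $\bm{y}$ is a primitive representation of $n/g^2$. Conversely, given any $g$ with $g^2\mid n$ and any primitive representation $\bm{y}$ of $n/g^2$, the vector $g\bm{y}$ is a representation of $n$ whose coordinate gcd is exactly $g$. This exhibits a bijection between the set of all representations of $n$ and the disjoint union, over $g$ with $g^2\mid n$, of the sets of primitive representations of $n/g^2$, which is precisely \eqref{eqn:nonpri}.

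For (2), I would substitute \eqref{eqn:nonpri} into the right-hand side of \eqref{eqn:prinon}: replacing $r_{\bm{a}}(n/d^2)$ by $\sum_{e^2\mid n/d^2} r^{p}_{\bm{a}}\!\left(n/(de)^2\right)$ and regrouping the resulting double sum according to $m=de$, one checks that the conditions ``$d^2\mid n$ and $e^2\mid n/d^2$'' are equivalent to ``$m^2\mid n$ and $d\mid m$''. Hence the double sum collapses to $\sum_{m^2\mid n} r^{p}_{\bm{a}}(n/m^2)\sum_{d\mid m}\mu(d)$, and since $\sum_{d\mid m}\mu(d)=\delta_{m=1}$ only the term $m=1$ survives, yielding $r^{p}_{\bm{a}}(n)$. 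The whole argument is essentially bookkeeping; the one step that needs a moment's care is this reindexing, namely verifying that the pairs $(d,e)$ with $d^2\mid n$, $e^2\mid n/d^2$ correspond bijectively to the pairs $(m,d)$ with $m^2\mid n$, $d\mid m$ via $m=de$, so that the classical identity $\sum_{d\mid m}\mu(d)=\delta_{m=1}$ applies. Beyond that I anticipate no real obstacle.
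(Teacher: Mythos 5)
Your proof is correct, and your argument for part (1) --- classifying representations of $n$ by the gcd $g$ of the coordinates and exhibiting a bijection between those with gcd exactly $g$ and the primitive representations of $n/g^2$ --- is essentially identical to the paper's. For part (2), however, you take a genuinely different route. The paper proves \eqref{eqn:prinon} independently of \eqref{eqn:nonpri}, by inclusion--exclusion: it introduces the sets $B_i$ of representations whose coordinate gcd is divisible by $p_i$ (for the primes $p_i$ with $p_i^2\mid n$), computes $|B_1\cup\cdots\cup B_t|$, and then observes that the resulting alternating sum over subsets $S\subseteq\{1,\dots,t\}$ is exactly the M\"obius-weighted sum over squarefree $d$ with $d^2\mid n$. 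You instead deduce (2) from (1) by substituting \eqref{eqn:nonpri} into the right-hand side of \eqref{eqn:prinon}, reindexing the double sum by $m=de$ (your verification that the pairs $(d,e)$ with $d^2\mid n$, $e^2\mid n/d^2$ correspond to pairs $(m,d)$ with $m^2\mid n$, $d\mid m$ is the right point to check, and it does hold), and invoking $\sum_{d\mid m}\mu(d)=\delta_{m=1}$. This is precisely the computation the paper relegates to the Remark following its proof, where it notes that each identity follows from the other by M\"obius inversion. Your derivation is shorter and makes the formal duality of the two identities transparent, at the cost of making (2) logically dependent on (1); the paper's inclusion--exclusion argument is self-contained and gives a direct combinatorial meaning to each term $\mu(d)\,r_{\bm{a}}(n/d^2)$. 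Both are complete proofs.
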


\begin{re}
Equation \eqref{eqn:nonpri} and Equation \eqref{eqn:prinon} clearly show us the quantitative relationship between $r_{\bm{a}}(n)$ and $r_{\bm{a}}^{p}(n)$. We mainly focus on the equation \eqref{eqn:prinon} since we need to derive the formula for $r_{\bm{a}}^{p}(n)$ from $r_{\bm{a}}(n)$.
\end{re}
Formulas like \eqref{eqn:Jacobi} arise because the underlying quadratic form satisfies a certain local-to-global principle, wherein any form which is indistinguishable from it modulo any integer $N$ (i.e., locally equivalent) is actually indistinguishable from it over the integers (i.e., globally equivalent). In general, these is not the case for an arbitrary quadratic form, but formulas resembling the right-hand side of \eqref{eqn:Jacobi} are valid for a natural average of the number of representations of integers by quadratic forms which are locally indistinguishable from the given quadratic form modulo global equivalence. In the theory of modular forms, these averages are Fourier coefficients of Eisenstein series. In order to use Theorem \ref{thm:relationship} (2) to obtain explicit formulas like \eqref{eqn:prifour} for the corresponding weighted averages of primitive representations when $k$ is even, we compute the associated weighted averages of the Fourier coefficients of Eisenstein series in the next theorem.
\begin{thm}\label{thm:mainthm}
Let $\psi$ be a Dirichlet character modulo $N$, and let $\varphi$ be a Dirichlet character modulo $M$. Let $n$ be an arbitrary positive integer.\\
\begin{enumerate}[leftmargin=*,align=left,label={\rm(\arabic*)}]
\item Denote the prime factorization of $M$, $N$ and $n$ by
\begin{align*}
N&=\displaystyle\prod_{p_i}{p_i^{\alpha_i}}\displaystyle\prod_{u_s}{u_s^{a_s}},\quad
M=\displaystyle\prod_{q_j}{q_j^{\beta_j}}\displaystyle\prod_{u_s}{u_s^{b_s}},\\
n&=\displaystyle\prod_{r_k}{r_k^{\lambda_k}}=\displaystyle\prod_{p_i}{p_i^{\gamma_i}}\displaystyle\prod_{q_j}{q_j^{\nu_j}}\displaystyle\prod_{u_s}{u_s^{c_s}}\cdot n_1\\
&=\displaystyle\prod_{q_j}{q_j^{\nu_j}}\displaystyle\prod_{u_s}{u_s^{c_s}}\cdot n_2=\displaystyle\prod_{u_s}{u_s^{c_s}}\cdot n_3,
\end{align*}
where
\[{\gcd}(n_1,NM)={\gcd}(n_2,M)={\gcd}(n_3,N,M)=1,
\]
\[
\alpha_i,\beta_j,\lambda_k,a_s,b_s \in\mathbb{N} \quad and \quad h,\gamma_i , \nu_j,c_s \in \mathbb{N} \cup\{0\}.
\]
Notice that the $u_s$ are precisely the primes dividing $\gcd(M,N)$, the $p_i$ primes divide $N$ but not $M$, and the $q_j$ primes divide $M$ but not $N$. Assume that $d$ and $m$ are positive. Then we have
\begin{multline}\label{eqn:maineqn}
\displaystyle\sum_{d^2\mid n}{\mu (d)}\displaystyle\sum_{m\mid\frac{n}{d^2}}{\psi\left(\frac{n}{md^2}\right)\varphi(m)m^h}=n_2^h\varphi(n_2)\displaystyle\prod_{u_s}{\left(1-\delta_{u_s\mid n}-\delta_{u_s^2\mid n}+\delta_{u_s^3\mid n}\right)}\\
\times\displaystyle\prod_{p_i^2\mid n}{\left(1-\frac{\overline{\varphi(p_i)}^2}{p_i^{2h}}\right)}\cdot \displaystyle\prod_{q_j}{\left(\left(1-\delta_{q_j^2\mid n}\overline{\psi(q_j)}^2\right)\psi\left({q_j^{\nu_j}}\right)\right)}\\
\times \prod_{r_k\mid n_1}\left(\frac{\delta_{r_k^2\mid n}\left(1-\overline{\psi(r_k)}^2\right)}{\varphi(r_k^{\lambda_k})\overline{\psi(r_k^{\lambda_k})}r_k^{h\lambda_k}}\cdot \frac{1-\left(\varphi(r_k)\overline{\psi(r_k)} r_k^{h}\right)^{\lambda_k-1}}{1-\varphi(r_k)\overline{\psi(r_k)} r_k^{h}} + \left(1+\frac{\psi(r_k)\overline{\varphi(r_k)}}{r_k^h}\right)\right).
\end{multline}
\item  In particular, if $\psi$ is a real (quadratic) character, then we have
\[
\sum_{d^2\mid n} \mu(d)\sum_{m\mid \frac{n}{d^2}} \psi\left(\frac{n}{md^2}\right)\varphi(m)m^h=c(n)n_2^h\prod_{r_k}\left(1+\frac{\psi(r_k)\overline{\varphi(r_k)}}{r_k^h}\right),
\]
where 
\[
c(n):=\varphi(n_2)\prod_{u_s}\left(1-\delta_{u_s\mid n}-\delta_{u_s^2\mid n} + \delta_{u_s^3\mid n}\right)\prod_{q_{j}}\left(1-\delta_{q_j^2\mid n} \right)\psi\left(q_j^{\nu_j}\right)\prod_{p_i^2\mid n}\left(1-\frac{\overline{\varphi(p_i^2)}}{p_i^{2h}}\right).
\]

\end{enumerate}
\end{thm}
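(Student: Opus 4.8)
The plan is to reduce \eqref{eqn:maineqn} to a local computation at each prime. Write $S(n)$ for its left-hand side and put $g(m):=\varphi(m)m^{h}$; both $\psi$ and $g$ are completely multiplicative, so the inner sum $\sum_{m\mid N'}\psi(N'/m)\varphi(m)m^{h}$ is the Dirichlet convolution $(\psi\ast g)(N')$ evaluated at $N'=n/d^{2}$. Introducing the multiplicative function $\chi$ with $\chi(m)=\mu(\sqrt{m})$ when $m$ is a perfect square and $\chi(m)=0$ otherwise (its multiplicativity is immediate from that of $\mu$), we get $S=\chi\ast\psi\ast g$, a convolution of multiplicative functions, hence multiplicative in $n$ --- this is exactly the ``primitivization'' operation of \eqref{eqn:prinon} applied to $\psi\ast g$. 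It therefore suffices to evaluate $S$ on prime powers, and since $\chi(p^{0})=1$, $\chi(p)=0$, $\chi(p^{2})=-1$ and $\chi(p^{j})=0$ for $j\geq3$, one has the recursion
\[
S(p^{e})=F(p^{e})-F(p^{e-2}),\qquad F:=\psi\ast g,
\]
with the convention $F(p^{j})=0$ for $j<0$.

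Next I would compute $F(p^{e})=\sum_{j=0}^{e}\psi(p^{e-j})\varphi(p^{j})p^{jh}$ according to the four possible behaviours of the pair $(\psi(p),\varphi(p))$, which match exactly the prime classes in the statement: $u_{s}$ (dividing $\gcd(N,M)$, so $\psi(u_{s})=\varphi(u_{s})=0$), $p_{i}$ (dividing $N$ only, so $\psi(p_{i})=0\neq\varphi(p_{i})$), $q_{j}$ (dividing $M$ only, so $\varphi(q_{j})=0\neq\psi(q_{j})$), and $r_{k}\mid n_{1}$ (coprime to $NM$, so $\psi(r_{k}),\varphi(r_{k})$ are roots of unity). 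In the three degenerate cases almost all terms of $F(p^{e})$ vanish, leaving $F(u_{s}^{e})=1$ if $e=0$ and $0$ otherwise, $F(p_{i}^{e})=\varphi(p_{i})^{e}p_{i}^{eh}$, and $F(q_{j}^{e})=\psi(q_{j})^{e}$. In the non-degenerate case $F(r_{k}^{e})$ is a geometric sum, governed by the telescoping relation $F(r_{k}^{e})=\psi(r_{k})^{2}F(r_{k}^{e-2})+\psi(r_{k})\varphi(r_{k})^{e-1}r_{k}^{(e-1)h}+\varphi(r_{k})^{e}r_{k}^{eh}$ valid for $e\geq2$. Feeding these into $S(p^{e})=F(p^{e})-F(p^{e-2})$ with $e=v_{p}(n)$ yields, for the four classes respectively: $1-\delta_{u_{s}\mid n}-\delta_{u_{s}^{2}\mid n}+\delta_{u_{s}^{3}\mid n}$ (after a short case check on $e\in\{0,1,2\}$ and $e\geq3$); $\varphi(p_{i})^{e}p_{i}^{eh}\bigl(1-\delta_{p_{i}^{2}\mid n}\overline{\varphi(p_{i})}^{2}p_{i}^{-2h}\bigr)$; $\psi(q_{j})^{e}\bigl(1-\delta_{q_{j}^{2}\mid n}\overline{\psi(q_{j})}^{2}\bigr)$; and the $r_{k}$-bracket appearing in \eqref{eqn:maineqn}.

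The step I expect to be the main obstacle is the global bookkeeping that recombines these local factors into the exact shape of \eqref{eqn:maineqn}. The prefactor $n_{2}^{h}\varphi(n_{2})$ has to be distributed back over the primes $p_{i}$ and $r_{k}\mid n_{1}$ --- it supplies $p_{i}^{\gamma_{i}h}\varphi(p_{i}^{\gamma_{i}})$ and $r_{k}^{\lambda_{k}h}\varphi(r_{k}^{\lambda_{k}})$ (each equal to $1$ when the exponent is $0$, consistent with $n_{2}=\prod_{p_{i}}p_{i}^{\gamma_{i}}\cdot n_{1}$) --- whereas the $u_{s}$- and $q_{j}$-products already stand alone. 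For the $r_{k}$-factor one divides $S(r_{k}^{\lambda_{k}})$ by $r_{k}^{\lambda_{k}h}\varphi(r_{k}^{\lambda_{k}})$ and, repeatedly using $|\psi(r_{k})|=|\varphi(r_{k})|=1$ to turn inverses into conjugates and to identify $\varphi(r_{k}^{\lambda_{k}})\overline{\psi(r_{k}^{\lambda_{k}})}r_{k}^{\lambda_{k}h}=\rho^{\lambda_{k}}$ with $\rho:=\varphi(r_{k})\overline{\psi(r_{k})}r_{k}^{h}$, collects the two top-degree terms of $S(r_k^{\lambda_k})$ into $1+\psi(r_{k})\overline{\varphi(r_{k})}r_{k}^{-h}$ and rewrites the tail $(\psi(r_{k})^{2}-1)F(r_{k}^{\lambda_{k}-2})$, via $F(r_{k}^{\lambda_{k}-2})=\psi(r_{k})^{\lambda_{k}-2}\tfrac{1-\rho^{\lambda_{k}-1}}{1-\rho}$, as the fractional summand; the factor $\delta_{r_{k}^{2}\mid n}$ may be inserted there harmlessly since that summand already vanishes when $\lambda_{k}=1$ (where $1-\rho^{\lambda_{k}-1}=0$) and the product ranges only over $r_{k}$ with $\lambda_{k}\geq1$. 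Assembling the four local factors together with the redistributed prefactor then gives \eqref{eqn:maineqn}.

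Finally, part (2) is the specialization to a real character $\psi$, for which $\overline{\psi(m)}=\psi(m)$. Then $\overline{\psi(r_{k})}^{2}=1$ for every $r_{k}\mid n_{1}$, so the fractional summand of each $r_{k}$-bracket vanishes and the bracket reduces to $1+\psi(r_{k})\overline{\varphi(r_{k})}r_{k}^{-h}$; similarly $\overline{\psi(q_{j})}^{2}=1$ turns the $q_{j}$-factor into $(1-\delta_{q_{j}^{2}\mid n})\psi(q_{j}^{\nu_{j}})$, while $\overline{\varphi(p_{i}^{2})}=\overline{\varphi(p_{i})}^{2}$ rewrites the $p_{i}$-product. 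Regrouping the surviving pieces as $c(n)\,n_{2}^{h}\prod_{r_{k}}\bigl(1+\psi(r_{k})\overline{\varphi(r_{k})}r_{k}^{-h}\bigr)$ --- the product extended over all prime divisors $r_{k}$ of $n$, since the extra factors at $p_{i},q_{j},u_{s}$, where $\psi$ or $\varphi$ vanishes, are $1$ --- is precisely the claimed identity.
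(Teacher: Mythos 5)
Your proof is correct, and it reaches the same local factors as the paper by a genuinely cleaner route. The paper never invokes multiplicativity of Dirichlet convolutions globally: instead it peels off the primes class by class through three separate inductions (equations \eqref{eqn:one}, \eqref{eqn:two}, \eqref{eqn:three}, removing the $u_s$, then the $q_j$, then the $p_i$ one at a time by analyzing which terms $\mu(d)\psi(\cdot)\varphi(\cdot)$ kills), and only for the residual sum over $n_1$ does it interchange the $d$- and $m$-sums, factor $\sum_{d^2\mid n_1/m}\mu(d)\overline{\psi(d^2)}$ into an Euler product, and evaluate the resulting geometric series --- which is its analogue of your $r_k$-computation \eqref{eqn:four}. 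Your observation that the left-hand side is $\chi\ast\psi\ast g$ with $\chi(d^2)=\mu(d)$ (and $0$ off squares), hence multiplicative, collapses all of that bookkeeping into the single identity $S(p^{e})=F(p^{e})-F(p^{e-2})$, after which the four prime classes are handled uniformly; the per-prime case checks you describe (the $e\in\{0,1,2\},e\geq 3$ check for $u_s$, the surviving single term for $p_i$ and $q_j$, the telescoped geometric sum and the harmless insertion of $\delta_{r_k^2\mid n}$ for $r_k$) all verify, as does the redistribution of $n_2^{h}\varphi(n_2)$ over the $p_i$ and $r_k\mid n_1$ and the extension of the $r_k$-product to all primes dividing $n$ in part (2). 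What your approach buys is brevity and a conceptual explanation of why the answer is an Euler product; what the paper's buys is that each reduction step is self-contained and elementary, at the cost of repeating essentially the same vanishing argument three times.
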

\begin{re}
As noted above Theorem \ref{thm:mainthm}, the sums $\sum_{m\mid n}{\psi\left(\frac{n}{m}\right)\varphi(m)m^h}$ are Fourier coefficients of certain Eisenstein series of integral weight (see Section \ref{sec:prelim}). The generating function for $r_{\bm{a}}(n)$ naturally decomposes into an Eisenstein series and a cusp form (see Section \ref{sec:decompose} and Theorems \ref{thm:eisen} and \ref{thm:eispace}), and these have integral weight when the rank $k$ is even.
\end{re}
The paper is organized as follows. In Section \ref{sec:prelim}, we give some preliminaries and necessary information about modular forms. In Section \ref{sec:relationshipthm}, we give a proof for Theorem \ref{thm:relationship}. In Section \ref{sec:MainThm}, we give a proof for Theorem \ref{thm:mainthm}. In Section \ref{sec:example}, we give some examples to show how one can combine Theorem \ref{thm:relationship} and Theorem \ref{thm:mainthm} to derive the formula for $r_{\bm a}^p(n)$ from $r_{\bm a}(n)$ whenever local and global equivalence are the same; we note that, more generally, one can use this method to obtain a formula for the contribution of the Eisenstein series part to the primitive representation numbers. In Appendix \ref{sec:app}, we give some additional examples to show the application of Theorem \ref{thm:relationship} and Theorem \ref{thm:mainthm} by using the formulas for $r_{\bm{a}}(n)$ provided by Liu Yiming.

\section{Preliminaries}\label{sec:prelim}
\subsection{Modular forms}
In this subsection, we mainly introduce the basic definition of modular forms and the generating functions for quadratic forms. Let $  \gamma=
\begin{pmatrix}
	a & b \\
	c & d
\end{pmatrix}\in \SL_{2}(\mathbb{Z})$ and $ \tau\in \mathcal{H} $, where $\mathcal{H}$ is the \textit{upper half plane}
\[
\mathcal{H} = \{\tau \in \mathbb{C}: \im(\tau) > 0\}.
\]
A \begin{it}fractional linear transformation\end{it} is defined by 
$\gamma (\tau):=\dfrac{a\tau+b}{c\tau+d}$. Let $N$ be a positive integer. The \textit{principal congruence subgroup of level N} is 
\[
\mathit\Gamma(N):=\left\{\begin{pmatrix}
	a & b \\
	c & d
\end{pmatrix}\in \SL_{2}(\mathbb{Z}): 
\begin{pmatrix}
	a & b \\
	c & d
\end{pmatrix}
\equiv
\begin{pmatrix}
	1 & 0 \\
	0 & 1
\end{pmatrix}
({\rm mod}\,N)
\right\}.
\]
\begin{defn}\label{thm:con}
A subgroup $\mathit\Gamma$ of $\SL_{2}(\mathbb{Z})$ is a \textbf{congruence subgroup} if $\mathit\Gamma(N) \subset \mathit\Gamma$ for some $N \in \mathbb{Z}^+$, in which case $\mathit\Gamma$ is a congruence subgroup of \textbf{level N}.
\end{defn}
We require the level $N$ congruence subgroups
\[
\mathit\Gamma_0(N):=\left\{\begin{pmatrix}
	a & b \\
	c & d
\end{pmatrix}\in \SL_{2}(\mathbb{Z}): 
\begin{pmatrix}
	a & b \\
	c & d
\end{pmatrix}
\equiv
\begin{pmatrix}
	* & * \\
	0 & *
\end{pmatrix}
({\rm mod}\,N)
\right\}
\]
and
\[
\mathit\Gamma_1(N):=\left\{\begin{pmatrix}
	a & b \\
	c & d
\end{pmatrix}\in \SL_{2}(\mathbb{Z}): 
\begin{pmatrix}
	a & b \\
	c & d
\end{pmatrix}
\equiv
\begin{pmatrix}
	1 & * \\
	0 & 1
\end{pmatrix}
({\rm mod}\,N)
\right\}.
\]
For any matrix $\gamma = \begin{pmatrix}
	a & b \\
	c & d
\end{pmatrix}\in \SL_{2}(\mathbb{Z})$, $\tau \in \SL_{2}(\mathbb{Z})$ and any integer $k$ and define the \textbf{weight-$k$ slash-operator} $[\gamma]_k$ on the function $f:\mathcal{H} \rightarrow \textbf{C}$ by
\[
f[\gamma]_k(\tau)=(c\tau +d)^{-k}f(\gamma (\tau)).
\]

\begin{defn}\label{thm:modular}
For any congruence subgroup $\mathit{\Gamma}\subseteq \SL_{2}(\mathbb{Z})$, a \textbf{modular form of weight} k, \textbf{with respect to} $\mathit{\Gamma}$  is a holomorphic function $ f:\mathcal{H}\to \mathbb{C}$ satisfying the following:
\begin{enumerate}[leftmargin=*,align=left,label={\rm(\arabic*)}]
\item
 For any $\gamma \in \mathit{\Gamma}$, $ f(\gamma(\tau))=(c\tau+d)^kf(\tau)$;
\item
$f[\alpha]_k$ is holomorphic at $\infty$ for all $\alpha \in \SL_{2}(\mathbb{Z})$.
\end{enumerate}
Since any congruence subgroup $\mathit\Gamma$ must contain some power of $T:=\left(\begin{smallmatrix}1&1\\ 0&1\end{smallmatrix}\right)\in\mathit\Gamma$ and conjugate groups $\alpha^{-1}\mathit\Gamma\alpha$ are also congruence subgroups, if $f$ is a modular form of weight $k$ for some congruence subgroup $\mathit\Gamma$, then for every $\alpha\in\SL_2(\Z)$ we have a \textbf{Fourier expansion}
\[
f[\alpha]_k(\tau)=\sum_{n=0}^{\infty} a_n e^{\frac{2\pi i n\tau}{M}}.
\] 
Here we omit the dependence on $f$ and $\alpha$ in the notation. We call $f$ a \textbf{cusp form} with respect to $\mathit\Gamma$ if $a_0=0$ in the Fourier expansion of $f[\alpha]_k$ for all $\alpha \in \SL_{2}(\mathbb{Z})$. The modular forms of weight k with respect to $\mathit\Gamma$ are form a vector space $M_k(\mathit{\Gamma})$ over $\C$ and we denote the subspace of cusp forms by $\mathcal{S}_k(\mathit\Gamma)$.
\end{defn}
\begin{defn}\label{thm:chispace}
For each Dirichlet character $\chi$ modulo $N$ define the $\chi\mbox{-}eigenspace$ of $M_k(\mathit{\Gamma}_1(N))$,
\[
M_k(N,\chi)=\{f \in M_k(\mathit{\Gamma}_1(N)):f[\gamma]_k=\chi(d_{\gamma})f \quad {\rm for\,\, all}\,\, \gamma \in \mathit{\Gamma}_0(N) \}.
\]
\end{defn}
We then have \cite[Exercise 4.3.4(a)]{first}
\[
M_k(\mathit{\Gamma}_1(N))=\bigoplus_{\chi}M_k(N,\chi),
\]
and the same result holds for cusp forms \cite[Exercise 4.3.4(b)]{first}
\[
\mathcal{S}_k(\mathit{\Gamma}_1(N))=\bigoplus_{\chi}\mathcal{S}_k(N,\chi).
\]
It also holds for Eisenstein series \cite[Exercise 4.3.4(c)]{first}
\[
\mathcal{E}_k(\mathit{\Gamma}_1(N))=\bigoplus_{\chi}\mathcal{E}_k(N,\chi).
\]
The space of \textbf{Eisenstein series} are defined as the orthogonal complement of the subspace of cusp forms inside a space of modular forms under an inner product known as the \textbf{Petersson inner product}, but we will simply investigate the Eisenstein series in this paper via a well-known basis (see Section \ref{sec:EisBasis} below).
\subsection{Quadratic forms and theta functions}
\begin{defn}\label{thm:qr}
We define a \textbf{quadratic form} $Q(\vec x)$ over a ring $R$ to be a degree 2 homogeneous polynomial
\[
Q(\vec x):=\displaystyle\sum_{1\leqslant i \leqslant j \leqslant n}c_{ij}x_ix_j
\]
in $n$ variables with coefficients $c_{ij}$ in $R$. Meanwhile we define the \textbf{representation numbers  $r_Q(m)$ (over $R$)} by
\[
r_Q(m) : = \#\{\vec x \in R^n : Q(\vec x) = m \}.
\]
We take $R=\Z$ throughout this paper and omit it in the notation.
\end{defn}
\begin{defn}\label{thm:theta}
Define the \textbf{theta series of} $Q$ as the Fourier series generating function for the representation numbers $r_Q(m)$ given by
\[
\Theta_Q(\tau):=\displaystyle\sum_{m=0}^{\infty}r_Q(m)e^{2\pi i m \tau}.
\]
\end{defn}
These theta series are modular forms (see \cite[Corollary 2.3.2]{Quadratic} for more details).
\begin{thm}\label{thm:thetamod}
For any $Q \in \mathbb{Z}^n$, $\Theta_Q(\tau)$ belongs to the vector space $M_{\frac{n}{2}}(N,\chi)$ for some Dirichlet character $\chi$ and $N \in \mathbb{N}$ when n is even.
\end{thm}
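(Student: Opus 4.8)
The plan is to realize $\Theta_Q$ as a classical theta series attached to the Gram matrix of $Q$ and to derive its transformation law directly from Poisson summation. Write $Q(\vec x)=\tfrac12\vec x^{\mathsf T}A\vec x$ with $A$ the symmetric positive definite even integral Gram matrix of $Q$ (positive definiteness being implicit so that the representation numbers are finite), so that $\Theta_Q(\tau)=\sum_{\vec x\in\mathbb{Z}^n}e^{\pi i\tau\,\vec x^{\mathsf T}A\vec x}=:\theta_A(\tau)$. Positive definiteness of $A$ makes this series converge absolutely and locally uniformly on $\mathcal H$, so $\theta_A$ is holomorphic there and bounded as $\im\tau\to\infty$, which will give holomorphy at the cusp $\infty$. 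One defines the level as the least $N\in\mathbb{N}$ for which $NA^{-1}$ is again even integral, and takes as candidate nebentypus the Kronecker symbol $\chi(\,\cdot\,)=\left(\tfrac{(-1)^{n/2}\det A}{\,\cdot\,}\right)$, a Dirichlet character modulo $N$.

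The core computation is the behavior under the two standard generators. Invariance under $\tau\mapsto\tau+1$ is immediate since $Q$ takes integer values, so $\theta_A(\tau+1)=\theta_A(\tau)$. For the inversion, apply Poisson summation to $f_\tau(\vec x)=e^{\pi i\tau\,\vec x^{\mathsf T}A\vec x}$; its Fourier transform is the Gaussian integral $\widehat{f_\tau}(\vec\xi)=(\det A)^{-1/2}(\tau/i)^{-n/2}e^{-\pi i\tau^{-1}\vec\xi^{\mathsf T}A^{-1}\vec\xi}$, whence $\theta_A(-1/\tau)=(\tau/i)^{n/2}(\det A)^{-1/2}\theta_{A^{-1}}(\tau)$. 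Splitting $\mathbb{Z}^n$ into residue classes modulo $N$ and using that $NA^{-1}$ is even integral rewrites $\theta_{A^{-1}}$ in terms of $\theta_A$ up to a finite Gauss sum; evaluating that Gauss sum and invoking quadratic reciprocity produces exactly the factor $\chi(\,\cdot\,)$ and yields the transformation law of $\theta_A$ under the Fricke involution $\tau\mapsto -1/(N\tau)$.

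Finally one upgrades these transformations to all of $\Gamma_0(N)$. The clean route is a descent on the lower row $(c,d)$ of $\gamma\in\Gamma_0(N)$ via the Euclidean algorithm, decomposing $\gamma$ into translations $T^m$ and conjugates of the inversion handled above, and checking that the accumulated automorphy factors and character values multiply consistently — this consistency is again governed by quadratic reciprocity. Once $\theta_A[\gamma]_{n/2}=\chi(d_\gamma)\theta_A$ is established for all $\gamma\in\Gamma_0(N)$, holomorphy at every cusp follows by applying the transformation formula to a set of $\SL_2(\mathbb{Z})$-coset representatives and noting that each resulting $q$-expansion has no negative terms. Hence $\Theta_Q\in M_{n/2}(N,\chi)$, and since $n$ is even the weight $n/2$ is an integer, so this is a genuine integral-weight form. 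The main obstacle is precisely the arithmetic bookkeeping of the Gauss sums and the reciprocity argument in passing from the generators to all of $\Gamma_0(N)$; in the paper's setting, where $Q=\sum_i a_ix_i^2$ is diagonal, one can sidestep most of it by writing $\Theta_Q(\tau)=\prod_{i=1}^{k}\theta(a_i\tau)$ with $\theta(\tau)=\sum_{x\in\mathbb{Z}}e^{2\pi ix^2\tau}$ the classical weight-$\tfrac12$ theta function on $\Gamma_0(4)$, so that the product has weight $k/2$, which is integral exactly when $k$ is even.
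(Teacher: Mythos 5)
The paper itself offers no proof of this theorem: it is stated as a quoted fact with a pointer to \cite[Corollary 2.3.2]{Quadratic}, so there is no internal argument to compare yours against. Your outline is the standard proof that such references carry out, and it is correct as a sketch. The reduction to the even Gram matrix $A$ with $2Q(\vec x)=\vec x^{\mathsf T}A\vec x$, the Poisson-summation computation of $\theta_A(-1/\tau)$, the identification of the level as the least $N$ with $NA^{-1}$ even integral, and the nebentypus $\chi=\left(\frac{(-1)^{n/2}\det A}{\cdot}\right)$ are all as in the literature, and you rightly flag the two hypotheses the paper leaves implicit, namely positive definiteness (needed for $r_Q(m)$ to be finite and for convergence) and the evenness of $n$ (which makes the weight integral, so the automorphy factor $(c\tau+d)^{n/2}$ is single-valued and no metaplectic bookkeeping is needed). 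The one place where your writeup is only a gesture is exactly where all the work lies: evaluating the Gauss sums after splitting into residue classes mod $N$, invoking quadratic reciprocity, and verifying that the automorphy factors and character values compose correctly when you descend from the generators to a general element of $\Gamma_0(N)$; a complete proof of that consistency occupies several pages in Schoeneberg or Ogg, so as stated this is an honest outline rather than a proof. Your closing remark is worth emphasizing: for the diagonal forms $\sum_i a_ix_i^2$ that the paper actually uses, the factorization $\Theta_Q(\tau)=\prod_{i=1}^k\theta(a_i\tau)$ with $\theta\in M_{1/2}(\Gamma_0(4))$ reduces the whole theorem to the classical modularity of the Jacobi theta function and the behavior of $V_a\colon f(\tau)\mapsto f(a\tau)$ on levels, giving a form of weight $k/2$ and level $4\lcm(a_1,\dots,a_k)$ with quadratic nebentypus; this is both shorter than the general Gram-matrix argument and exactly what Sections 5 and the appendix rely on.
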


\subsection{Decomposition into Eisenstein series and cusp forms.}\label{sec:decompose}
By the theory of modular forms, we can write the theta series as
\begin{equation}\label{eqn:seriesd}
\Theta_Q(\tau)=E(\tau)+C(\tau)
\end{equation}
where $E(\tau)$ is an Eisenstein series and $C(\tau)$ is a cusp form. Comparing the $m^{\rm th}$ Fourier coefficient of the equation \eqref{eqn:seriesd}, we obtain the decomposition \cite[2.4 Asymptotic Statements about $r_Q(m)$, p.\hskip 0.1cm 31]{Quadratic}
\[
r_Q(m)=a_E(m)+a_C(m).
\]
Generally speaking, the Fourier coefficients of the Eisenstein series grow faster than the Fourier coefficients of the cusp form, so $a_E(m)$ gives a good approximation of $r_Q(m)$. As such, it is beneficial to obtain explicit formulas for a bases of spaces of Eisenstein series and then to determine the Eisenstein series $E$ occurring in \eqref{eqn:seriesd} as a linear combination of these basis elements, ultimately giving a formula for $a_E(m)$. 
\subsection{Bases of Eisenstein series.}\label{sec:EisBasis}
We require a basis of the weight $k$ Eisenstein series and their Fourier expansions.  Let 
\[
g(\chi):=\sum_{n\pmod{N}} \chi(n)e^{\frac{2\pi i n}{N}}
\]
 denote the  \textbf{Gauss sum} for a character $\chi$ of modulus $N$. Letting $\psi$ and $\varphi$ denote characters of moduli $u$ and $v$ such that $uv=N$, for $k\geq 3$ there is an Eisenstein series $G_{k}^{\psi,\varphi}\in M_{k}(N,\psi\varphi)$ (see the definition on \cite[p. 127]{first}). Using Hecke's trick, via analytic continuation one can also obtain $G_2^{\psi,\varphi}\in M_{2}(N,\psi\varphi)$ when $\psi$ and $\varphi$ are not both trivial.

The Fourier expansions of $G_{k}^{\psi,\varphi}$ are given in the following Theorem \ref{thm:eisen}, which can be found in \cite[Theorem 4.5.1]{first}.
\begin{thm}\label{thm:eisen}
Let $k\in\N$ and characters $\psi$ and $\varphi$ of moduli $u$ and $v$, respectively, be given such that either $k\geq 3$ or ($k=2$ and at least one of $\psi$ or $\varphi$ is non-trivial). The Eisenstein series $G_k^{\psi,\varphi}\in M_{k}(N,\psi\varphi)$ takes the form
\begin{equation}\label{eqn:eisen}
G_k^{\psi,\varphi}=\frac{C_kg(\overline{\varphi})}{v^k}E^{\psi,\varphi}_k(\tau),
\end{equation}
where $C_k\in\C$ is a constant depending only on $k$ and $E^{\psi,\varphi}_k(\tau)$ has Fourier expression
\begin{equation}\label{eqn:efe}
E^{\psi,\varphi}_k(\tau)=\delta(\psi)L(1-k,\varphi)+2\displaystyle\sum_{n=1}^{\infty}\sigma_{k-1}^{\psi,\varphi}(n)q^n,\quad q=e^{2\pi i \tau}.
\end{equation}
Here $L(s,\varphi)$ is the $L$-function for the character $\varphi$, $\delta(\psi)$ is $1$ if $\psi =  {\bm{1}}_{1}$ and is  $0$ otherwise, and the generalized power sum in the Fourier coefficient is 
\begin{equation}\label{eqn:gps}
\sigma_{k-1}^{\psi,\varphi}(n)=\displaystyle\sum_{\substack{m\mid n\\m>0}}\psi(n/m)\varphi(m)m^{k-1}.
\end{equation}
\end{thm}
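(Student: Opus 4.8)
The statement is the classical Fourier expansion of the Eisenstein series attached to a pair of Dirichlet characters, and the plan is to reproduce the computation behind \cite[Theorem 4.5.1]{first}. I would begin from the lattice-sum definition (see \cite[p.~127]{first}), which for $k\geq 3$ converges absolutely and may be written as
\[
G_k^{\psi,\varphi}(\tau)=\sum_{(c,d)\neq(0,0)}\frac{\psi(c)\varphi(d)}{(c\tau+d)^k},
\]
where $\psi(c)$ is read modulo $u$ and $\varphi(d)$ modulo $v$. The first step is to split the sum according to whether $c=0$ or $c\neq 0$: the $c=0$ block is independent of $\tau$ and will produce the constant term, while the $c\neq 0$ block carries the entire $q$-expansion. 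Along the way one records the parity constraint $(\psi\varphi)(-1)=(-1)^k$, since otherwise pairing $(c,d)$ with $(-c,-d)$ forces $G_k^{\psi,\varphi}\equiv 0$.

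For the $c\neq 0$ terms I would fix $c$, group the inner sum over $d$ into residue classes $d\equiv d_0\pmod v$, write $d=d_0+vd'$, and apply the Lipschitz summation formula
\[
\sum_{d'\in\Z}\frac{1}{(z+d')^k}=\frac{(-2\pi i)^k}{(k-1)!}\sum_{m=1}^{\infty}m^{k-1}e^{2\pi i m z},\qquad \im(z)>0,
\]
to $z=(c\tau+d_0)/v$ for $c>0$ (and its lower-half-plane reflection for $c<0$). This converts each inner sum into an exponential series and extracts the factor $v^{-k}$. The residue sum $\sum_{d_0\,(v)}\varphi(d_0)e^{2\pi i m d_0/v}$ is then evaluated by the standard Gauss-sum identity, and after reducing the (possibly imprimitive) $\varphi$ to its primitive inducing character by separability and using the relations between $g(\varphi)$ and $g(\overline\varphi)$, this accounts precisely for the prefactor $C_k\,g(\overline\varphi)/v^k$. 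Folding the $c>0$ and $c<0$ contributions together—they agree because of the parity $(\psi\varphi)(-1)=(-1)^k$—produces the factor $2$ in front of the $q$-sum in \eqref{eqn:efe}. Finally, reorganizing the remaining double sum over $c\geq 1$ (weighted by $\psi(c)$) and $m\geq 1$ by collecting terms with a fixed exponent $n$ identifies the coefficient of $q^n$ with the divisor sum $\sigma_{k-1}^{\psi,\varphi}(n)=\sum_{m\mid n}\psi(n/m)\varphi(m)m^{k-1}$.

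The constant term comes from the $c=0$ block, $\sum_{d\neq 0}\varphi(d)\psi(0)/d^k$, which is nonzero only when $\psi$ is the trivial character $\bm1_1$ (so that $\psi(0)=1$), giving the factor $\delta(\psi)$. After the same normalization by $C_k g(\overline\varphi)/v^k$, this block reduces via the relation between $\sum_{d\geq 1}\varphi(d)d^{-k}$ and the Hurwitz-zeta expression for $L(s,\varphi)$, together with the functional equation identifying the value at $1-k$, to $\delta(\psi)L(1-k,\varphi)$. Dividing the entire computed expansion by the prefactor $C_k g(\overline\varphi)/v^k$ then yields exactly $E_k^{\psi,\varphi}$ as in \eqref{eqn:efe}, and the constant $C_k$ is read off from the Lipschitz normalization $(-2\pi i)^k/(k-1)!$ combined with the factor $2$ that has been pulled to the front.

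Two points demand extra care, and I expect the first to be the main obstacle. For $k=2$ the lattice sum no longer converges absolutely, so I would introduce the Hecke parameter
\[
G_2^{\psi,\varphi}(\tau,s)=\sum_{(c,d)\neq(0,0)}\frac{\psi(c)\varphi(d)}{(c\tau+d)^2\,|c\tau+d|^{2s}},
\]
establish its meromorphic continuation to $s=0$, and verify that the non-holomorphic correction term (the analogue of the $\pi/\im(\tau)$ term attached to $E_2$) vanishes under the hypothesis that at least one of $\psi,\varphi$ is nontrivial; controlling this analytic continuation and the vanishing is the delicate part. The second point is membership in $M_k(N,\psi\varphi)$: the transformation law $G_k^{\psi,\varphi}[\gamma]_k=\psi\varphi(d_\gamma)\,G_k^{\psi,\varphi}$ for $\gamma\in\mathit\Gamma_0(N)$ follows from how $\gamma$ permutes the residue pairs $(c,d)$ modulo $N=uv$ together with the multiplicativity of $\psi$ and $\varphi$, while holomorphy at the cusps is immediate from the boundedness of the $q$-expansion already computed.
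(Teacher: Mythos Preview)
The paper does not prove this theorem at all; it is quoted directly from \cite[Theorem~4.5.1]{first} and used only as a black box to describe the basis of Eisenstein series in Section~\ref{sec:EisBasis}. Your outline is essentially the standard Diamond--Shurman computation (Lipschitz summation, Gauss-sum evaluation, and Hecke's trick at $k=2$), so it is consistent with the cited source, but in the context of this paper no proof is expected and your proposal goes well beyond what the authors themselves supply.
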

Following \cite[p.\hskip 0.1cm 129]{first}, for any positive integer $N$ and any integer $k\geqslant 3$, let $A_{N,k}$ be the set of triples $(\psi,\varphi,t)$ such that $\psi$ and $\varphi$ are primitive Dirichlet characters modulo $u$ and $v$ with $(\psi\varphi)(-1) = (-1)^k$, and $t$ is a positive integer such that $tuv\mid N$. Then $|A_{N,k}|=\dim(\mathcal{E}_k(\varGamma_1(N)))$. For any triple $(\psi,\varphi,t)\in A_{N,k}$ define
\[
E_{k}^{\psi,\varphi,t}(\tau)=E_{k}^{\psi,\varphi}(t\tau).
\]
For $k=2$, we follow \cite[p.\hskip 0.1cm 132 and p.\hskip 0.1cm133]{first} and let $A_{N,2}$ be the set of triples $(\psi,\varphi,t)$ such that $\psi$ and $\varphi$ are primitive Dirichlet characters modulo $u$ and $v$ with $(\psi\varphi)(-1)=1$, and $t$ is and integer such that $1<tuv\mid N$. Note that the triple $(\bm{1}_1,\bm{1}_1,1)$ is excluded from $A_{N,2}$. Then $|A_{N,2}|=$ dim$(\mathcal{E}_2(\varGamma_1(N)))$. For any triple $(\psi,\varphi,t)\in A_{N,2}$ define
\begin{align*}
E_{2}^{\psi,\varphi,t}(\tau)=
\begin{cases}
E_{2}^{\psi,\varphi}(t\tau)&\text{unless $\psi=\varphi=\bm{1}_1$},\\
E_{2}^{\bm{1}_1,\bm{1}_1}(\tau)-tE_{2}^{\bm{1}_1,\bm{1}_1}(t\tau)&\text{if $\psi=\varphi=\bm{1}_1$},
\end{cases}
\end{align*}
where we abuse notation and also use the definition \eqref{eqn:efe} in the case when $\psi$ and $\varphi$ are trivial. Combining \cite[Theorem 4.5.2, p.\hskip 0.1cm 129 and Theorem 4.6.2, p.\hskip 0.1cm 133]{first}, we obtain bases for certain Eisenstein series subspaces.
\begin{thm}\label{thm:eispace}
Let N be a positive integer and $k \geqslant 2$. The set 
\[
\{E_k^{\psi,\varphi,t}:(\psi,\varphi,t)\in A_{N,k}\}
\]
represents a basis of $\mathcal{E}_k(\varGamma_1(N))$. For any character $\chi$ modulo $N$, the set
\[
\{E_k^{\psi,\varphi,t}:(\psi,\varphi,t)\in A_{N,k},\,\psi\varphi = \chi\}
\]
represents a basis of $\mathcal{E}_k(N,\chi)$.
\end{thm}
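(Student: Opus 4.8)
The plan is to prove both assertions at once from three ingredients — membership of each $E_k^{\psi,\varphi,t}$ in the correct eigenspace, linear independence of the whole family, and the dimension equality $|A_{N,k}|=\dim\mathcal{E}_k(\Gamma_1(N))$ already recorded above — following \cite[Theorems 4.5.2 and 4.6.2]{first}. Because that dimension identity is available, I would not need an independent dimension computation: a linearly independent subset of $\mathcal{E}_k(\Gamma_1(N))$ whose cardinality equals the dimension is automatically a basis, which is the first assertion. The second assertion then follows by counting. Writing $S_\chi=\{(\psi,\varphi,t)\in A_{N,k}:\psi\varphi=\chi\}$, membership places the subfamily indexed by $S_\chi$ inside $\mathcal{E}_k(N,\chi)$, and since the eigenspaces are in direct sum (Definition \ref{thm:chispace} and $\mathcal{E}_k(\Gamma_1(N))=\bigoplus_\chi\mathcal{E}_k(N,\chi)$), the spans $W_\chi:=\Span\{E_k^{\psi,\varphi,t}:(\psi,\varphi,t)\in S_\chi\}\subseteq\mathcal{E}_k(N,\chi)$ satisfy $\sum_\chi\dim W_\chi=|A_{N,k}|=\sum_\chi\dim\mathcal{E}_k(N,\chi)$; as $\dim W_\chi\le\dim\mathcal{E}_k(N,\chi)$ term by term, equality is forced for every $\chi$, so each subfamily is a basis of its eigenspace.

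\emph{Membership.} First I would check $E_k^{\psi,\varphi,t}\in\mathcal{E}_k(N,\psi\varphi)$ for each $(\psi,\varphi,t)\in A_{N,k}$. By Theorem \ref{thm:eisen} and \eqref{eqn:eisen}, $E_k^{\psi,\varphi}$ differs from $G_k^{\psi,\varphi}\in M_k(uv,\psi\varphi)$ by a nonzero scalar, so it lies in $M_k(uv,\psi\varphi)$. The scaling operator $V_t\colon f(\tau)\mapsto f(t\tau)$ sends $M_k(uv,\psi\varphi)$ into $M_k(tuv,\psi\varphi)\subseteq M_k(N,\psi\varphi)$ since $tuv\mid N$, and it preserves the nebentypus. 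It also carries the Eisenstein subspace into the Eisenstein subspace, i.e. $V_t(\mathcal{E}_k(\Gamma_1(uv)))\subseteq\mathcal{E}_k(\Gamma_1(tuv))$, a standard property of this level-raising map (it sends cusp forms to cusp forms and keeps the image regular, rather than cuspidal, at every cusp).

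\emph{Linear independence.} Since a modular form is determined by its Fourier expansion, I would argue on $q$-expansions. By \eqref{eqn:efe} and \eqref{eqn:gps}, the coefficient of $q^n$ in $E_k^{\psi,\varphi,t}$ is $2\,\delta_{t\mid n}\,\sigma_{k-1}^{\psi,\varphi}(n/t)$ for $n\ge1$, and the attached Dirichlet series is $2\,t^{-s}\sum_{n}\sigma_{k-1}^{\psi,\varphi}(n)n^{-s}=2\,t^{-s}L(s,\psi)L(s-k+1,\varphi)$. A vanishing combination $\sum c_{\psi,\varphi,t}E_k^{\psi,\varphi,t}=0$ therefore yields a vanishing $\C$-linear relation among the functions $t^{-s}L(s,\psi)L(s-k+1,\varphi)$. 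The products $L(s,\psi)L(s-k+1,\varphi)$ for distinct primitive pairs $(\psi,\varphi)$ are distinguished by their Euler factors at primes $p\nmid N$: inspecting the coefficients of $q^{p^a}$, for which $t\mid p^a$ together with $t\mid N$ and $p\nmid N$ forces $t=1$, isolates the $t=1$ contributions and separates distinct pairs, using that two distinct primitive characters disagree at infinitely many primes. Dividing out the resulting factor and inducting on $t$ disentangles the twists $t^{-s}$, forcing every $c_{\psi,\varphi,t}=0$.

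\emph{The main obstacle} is the weight $k=2$ case. There the totally trivial triple $(\bm{1}_1,\bm{1}_1,1)$ must be excluded because no holomorphic weight-$2$ level-$1$ Eisenstein series exists, and the genuinely modular replacements $E_2^{\bm{1}_1,\bm{1}_1}(\tau)-tE_2^{\bm{1}_1,\bm{1}_1}(t\tau)$ emerge only after Hecke's analytic continuation. I would need to confirm, from the expansion \eqref{eqn:efe} of $E_2^{\bm{1}_1,\bm{1}_1}$, that the non-holomorphic term common to $E_2^{\bm{1}_1,\bm{1}_1}(\tau)$ and $E_2^{\bm{1}_1,\bm{1}_1}(t\tau)$ cancels in the difference, so that it is a true element of $\mathcal{E}_2(\Gamma_1(N))$, and then re-run the independence bookkeeping with these difference series in place of the missing $t=1$ term, as in \cite[pp.~132--133]{first}. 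The secondary technical point is managing the coincidences among the moduli $uv$ for different admissible pairs in the independence step.
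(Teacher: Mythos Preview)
The paper does not supply its own proof of this theorem. It is stated in the preliminaries section as a quotation from the literature, with the sentence ``Combining \cite[Theorem 4.5.2, p.\hskip 0.1cm 129 and Theorem 4.6.2, p.\hskip 0.1cm 133]{first}, we obtain bases for certain Eisenstein series subspaces'' serving as the entire justification. There is nothing to compare your argument against beyond that citation.

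Your sketch is a reasonable outline of how the cited proof in Diamond--Shurman actually proceeds: membership in the correct eigenspace via the level-raising operator $V_t$, linear independence via the associated Dirichlet series and their Euler products, and the dimension count $|A_{N,k}|=\dim\mathcal{E}_k(\Gamma_1(N))$ to conclude. The eigenspace-counting argument you give for the second assertion is clean and correct. The main soft spot is the independence step: your induction on $t$ after separating the $t=1$ terms is gestured at rather than carried out, and the weight-$2$ bookkeeping with the differences $E_2^{\bm{1}_1,\bm{1}_1}(\tau)-tE_2^{\bm{1}_1,\bm{1}_1}(t\tau)$ would need to be written down explicitly for a self-contained proof. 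But since the paper itself simply invokes the reference, you have already gone well beyond what the paper does.
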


\subsection{Well-known results.}
There are some well-known formulas for the sums of four squares and six squares which follow from the fact that the theta functions are indeed modular forms (which we have explained in an earlier part of the preliminaries section). A modern proof of Jacobi's formula \eqref{eqn:Jacobi} can be found in \cite{Jac}.
\begin{thm}[Jacobi, 1829]\label{thm:jacobi}
 For $m \in \mathbb{N}$, we have
\begin{equation}\label{eqn:jaco}
r_{(1,1,1,1)}(n) = 8\cdot\sigma(n)-32\cdot\delta_{4\mid n}\cdot\sigma\left(\frac{n}{4}\right).
\end{equation}
\end{thm}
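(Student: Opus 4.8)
The plan is to locate the theta series of $x_1^2+x_2^2+x_3^2+x_4^2$ inside a two-dimensional space of modular forms and then read off \eqref{eqn:jaco} from its first two Fourier coefficients. First I would write $\Theta(\tau):=\sum_{m\geq 0}r_{(1,1,1,1)}(m)q^m=\theta(\tau)^4$ with $\theta(\tau):=\sum_{x\in\Z}q^{x^2}$. By Theorem \ref{thm:thetamod} (the rank $n=4$ is even) we have $\Theta\in M_2(N,\chi)$ for some $N\in\N$ and some Dirichlet character $\chi$; the standard recipe attaching a level and a quadratic nebentypus to an integral quadratic form (as in \cite{Quadratic}) shows that one may take $N=4$ and $\chi$ trivial, so $\Theta\in M_2(\Gamma_0(4))$.

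Next I would apply the decomposition \eqref{eqn:seriesd}, writing $\Theta=E+C$ with $E\in\mathcal{E}_2(\Gamma_0(4))$ and $C\in\mathcal{S}_2(\Gamma_0(4))$. The key structural input is $\mathcal{S}_2(\Gamma_0(4))=\{0\}$ (the modular curve $X_0(4)$ has genus $0$), so $C=0$ and $\Theta$ is a pure Eisenstein series. Using Theorem \ref{thm:eispace}, the triples in $A_{4,2}$ with $\psi\varphi$ trivial are exactly $(\bm{1}_1,\bm{1}_1,2)$ and $(\bm{1}_1,\bm{1}_1,4)$: a factor of conductor $4$ would have to be the odd character $\chi_{-4}$, and using it once makes $\psi\varphi$ odd while using it twice forces $uv=16\nmid 4$. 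Hence $E_2^{\bm{1}_1,\bm{1}_1}(\tau)-2E_2^{\bm{1}_1,\bm{1}_1}(2\tau)$ and $E_2^{\bm{1}_1,\bm{1}_1}(\tau)-4E_2^{\bm{1}_1,\bm{1}_1}(4\tau)$ form a basis of $\mathcal{E}_2(\Gamma_0(4))$, where by \eqref{eqn:efe} and \eqref{eqn:gps} we have $E_2^{\bm{1}_1,\bm{1}_1}=\zeta(-1)+2\sum_{n\geq 1}\sigma(n)q^n$.

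It then remains to determine $a,b$ with $\Theta=a\bigl(E_2^{\bm{1}_1,\bm{1}_1}(\tau)-2E_2^{\bm{1}_1,\bm{1}_1}(2\tau)\bigr)+b\bigl(E_2^{\bm{1}_1,\bm{1}_1}(\tau)-4E_2^{\bm{1}_1,\bm{1}_1}(4\tau)\bigr)$. Since the map from this two-dimensional space to the ordered pair of coefficients at $q^0$ and $q^1$ is injective, it suffices to match those two, using $r_{(1,1,1,1)}(0)=1$ and $r_{(1,1,1,1)}(1)=8$ (the eight vectors $\pm e_i$). Solving the resulting $2\times 2$ linear system gives $a=0$ and $b=4$, so $\Theta=4\bigl(E_2^{\bm{1}_1,\bm{1}_1}(\tau)-4E_2^{\bm{1}_1,\bm{1}_1}(4\tau)\bigr)$; comparing the coefficient of $q^n$ yields $r_{(1,1,1,1)}(n)=8\sigma(n)-32\,\delta_{4\mid n}\,\sigma(n/4)$, which is \eqref{eqn:jaco}.

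I expect the real obstacles to be the two facts invoked here as black boxes rather than any of the computations: that $\theta^4$ has level exactly $4$ with trivial nebentypus (so that the ambient space stays two-dimensional with no extra basis vectors), and that $\mathcal{S}_2(\Gamma_0(4))=\{0\}$ (so that $\Theta$ is purely Eisenstein). Once both are granted, the remainder is the routine linear algebra of matching two $q$-expansion coefficients. Alternatively one could bypass modular forms entirely and derive \eqref{eqn:jaco} from Jacobi's triple product identity and elementary theta-function manipulations, or from counting right ideals in the Hurwitz quaternion order, as in \cite{Jac}.
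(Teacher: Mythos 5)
Your argument is correct, but it is worth noting that the paper does not actually prove Theorem \ref{thm:jacobi} at all: it simply cites Hirschhorn's elementary proof in \cite{Jac}, which proceeds via Jacobi's triple product identity rather than via modular forms. What you have written is instead precisely the method the paper itself advertises in its appendix for producing the analogous formulas for the other quaternary forms (valence formula plus the Eisenstein basis of Theorems \ref{thm:eisen} and \ref{thm:eispace}, then linear algebra on finitely many coefficients), specialized to $\theta^4$. The computation checks out: $A_{4,2}$ consists only of $(\bm{1}_1,\bm{1}_1,2)$ and $(\bm{1}_1,\bm{1}_1,4)$ since there is no primitive character modulo $2$ and the unique primitive character modulo $4$ is odd; the constant terms of your two basis elements are $\tfrac{1}{12}$ and $\tfrac14$ while both have first coefficient $2$, so the $2\times 2$ system $a+3b=12$, $a+b=4$ indeed gives $a=0$, $b=4$ and hence \eqref{eqn:jaco}. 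The injectivity of the map to the pair $(a_0,a_1)$ follows from the nonvanishing of that $2\times2$ determinant, so no separate Sturm-type bound is needed. The two facts you flag as black boxes --- that $\theta^4\in M_2(\varGamma_0(4))$ with trivial nebentypus and that $\mathcal{S}_2(\varGamma_0(4))=\{0\}$ --- are standard but genuinely external to the paper, whose Theorem \ref{thm:thetamod} only asserts membership in $M_2(N,\chi)$ for some unspecified $N$ and $\chi$; the citation to \cite{Jac} buys the paper a proof with no such inputs, while your route has the advantage of being uniform with the rest of the paper's machinery.
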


Letting $\left(\frac{\cdot}{\cdot}\right)$ denote the \textbf{Kronecker-Jacobi-Legendre symbol}, a similar formula for representations of integers as sums of six squares can be found in \cite[Sum of 6 Squares, p.\hskip 0.1cm 38]{Six}.
\begin{thm}\label{thm:six}
For $m \in \mathbb{N}$, we have
\begin{equation}\label{eqn:jacobi}
r_{(1,1,1,1,1,1)}(n) = 16\cdot\displaystyle\sum_{m\mid n}{\chi_{-4}\left(\frac{n}{m}\right)m^2}-4\cdot\displaystyle\sum_{m\mid n}{\chi_{-4}(m)m^2}.
\end{equation}
\end{thm}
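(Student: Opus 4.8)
The plan is to identify the theta series of $Q(\vec x)=x_1^2+\cdots+x_6^2$ with an explicit element of a space of Eisenstein series and then to read off its Fourier coefficients. Writing $\theta(\tau)=\sum_{n\in\Z}q^{n^2}$, we have $\Theta_Q=\theta^6$, and by Theorem \ref{thm:thetamod} (the rank $6$ being even) we have $\Theta_Q\in M_3(4,\chi_{-4})$; the level $4$ and the odd character $\chi_{-4}=\left(\frac{-4}{\cdot}\right)$ arise from the standard transformation law of $\theta$ under $\Gamma_0(4)$, and $\chi_{-4}(-1)=-1=(-1)^3$ is consistent with the weight $k=3$. The first structural input is that the relevant cusp form space vanishes, $\mathcal{S}_3(4,\chi_{-4})=\{0\}$, which follows from the dimension formula for cusp forms of weight $3$ on $\Gamma_1(4)$. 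Consequently $M_3(4,\chi_{-4})=\mathcal{E}_3(4,\chi_{-4})$, so $\Theta_Q$ is determined once we expand it in an Eisenstein basis.

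First I would pin down that basis via Theorem \ref{thm:eispace}. The only primitive Dirichlet characters of conductor dividing $4$ are the trivial character $\bm{1}_1$ (conductor $1$) and $\chi_{-4}$ (conductor $4$), there being no primitive character of conductor $2$. The condition $\psi\varphi=\chi_{-4}$ for a triple $(\psi,\varphi,t)\in A_{4,3}$ therefore forces $\{\psi,\varphi\}=\{\bm{1}_1,\chi_{-4}\}$; since then $uv=4$ and $tuv\mid 4$, we must have $t=1$. This leaves exactly the two triples $(\bm{1}_1,\chi_{-4},1)$ and $(\chi_{-4},\bm{1}_1,1)$, so $\dim\mathcal{E}_3(4,\chi_{-4})=2$ with basis $\{E_3^{\bm{1}_1,\chi_{-4}},E_3^{\chi_{-4},\bm{1}_1}\}$, and I write $\Theta_Q=a\,E_3^{\bm{1}_1,\chi_{-4}}+b\,E_3^{\chi_{-4},\bm{1}_1}$ for constants $a,b\in\C$.

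To determine $a$ and $b$ I would match Fourier coefficients using Theorem \ref{thm:eisen}. By \eqref{eqn:efe} and \eqref{eqn:gps}, for $n\geq 1$ the $n$-th coefficient of $E_3^{\bm{1}_1,\chi_{-4}}$ is $2\sigma_2^{\bm{1}_1,\chi_{-4}}(n)=2\sum_{m\mid n}\chi_{-4}(m)m^2$ and that of $E_3^{\chi_{-4},\bm{1}_1}$ is $2\sigma_2^{\chi_{-4},\bm{1}_1}(n)=2\sum_{m\mid n}\chi_{-4}(n/m)m^2$, while their constant terms are $\delta(\bm{1}_1)L(-2,\chi_{-4})=L(-2,\chi_{-4})$ and $\delta(\chi_{-4})L(-2,\bm{1}_1)=0$. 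Comparing constant terms gives $a\,L(-2,\chi_{-4})=r_Q(0)=1$, and comparing the coefficients of $q$ gives $2a+2b=r_Q(1)=12$, since $1$ is a sum of six squares in exactly $6\cdot 2=12$ ways. The second input is the exact $L$-value $L(-2,\chi_{-4})=-\tfrac12$, which comes from $L(1-k,\varphi)=-B_{k,\varphi}/k$ together with a short computation of the generalized Bernoulli number $B_{3,\chi_{-4}}$ (equivalently, from $\zeta(-2,\tfrac14)-\zeta(-2,\tfrac34)$ via $\zeta(-2,x)=-B_3(x)/3$). These two equations yield $a=-2$ and $b=8$.

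Substituting $a=-2$ and $b=8$, the $n$-th Fourier coefficient of $\Theta_Q$ becomes $16\sum_{m\mid n}\chi_{-4}(n/m)m^2-4\sum_{m\mid n}\chi_{-4}(m)m^2$, which is exactly the asserted formula. I expect the main obstacle to be the two structural inputs rather than the bookkeeping: namely verifying $\mathcal{S}_3(4,\chi_{-4})=\{0\}$ (so that $\Theta_Q$ is genuinely Eisenstein, with no cusp-form ambiguity) and evaluating $L(-2,\chi_{-4})=-\tfrac12$ exactly; once both are established, solving the $2\times 2$ linear system for $a,b$ and reading off the coefficients is immediate.
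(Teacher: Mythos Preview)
Your argument is correct and complete: the identification $\theta^6\in M_3(4,\chi_{-4})$, the enumeration of the two admissible triples in $A_{4,3}$ with $\psi\varphi=\chi_{-4}$, the vanishing of $\mathcal{S}_3(4,\chi_{-4})$, the value $L(-2,\chi_{-4})=-\tfrac12$, and the resulting $2\times 2$ system all check out.

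As for comparison, the paper does not actually prove Theorem~\ref{thm:six}; it is listed under ``Well-known results'' and simply attributed to \cite[p.~38]{Six}. Your proposal therefore supplies a proof where the paper gives none. That said, your method is exactly the template the paper invokes in the appendix for the analogous formulas: the authors remark there that such identities ``directly follow from the valence formula \dots, Theorem~\ref{thm:eisen} and Theorem~\ref{thm:eispace}'' by checking sufficiently many coefficients. So while the paper offers no proof for this particular statement, your approach is the one it endorses in spirit for the surrounding results.
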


\section{The proof of Theorem \ref{thm:relationship}}\label{sec:relationshipthm}

\begin{proof}[Proof of Theorem \ref{thm:relationship}]
(1) For the equation
\[\displaystyle\sum_{i=1}^{k}{a_i x_i^2}=n,
\]
we classify the solutions by the greatest common divisor of $x_1,x_2,\dots,x_k$. Setting
\begin{align*}
A(n)&:=\left\{(x_1,x_2,\cdots, x_k)\in\mathbb{Z}^k:\,\displaystyle\sum_{i=1}^{k}{a_i x_i^2}=n\right\},\\
A_d(n)&:=\{(x_1,x_2,\cdots, x_k)\in A(n):\, {\rm gcd}(x_1,x_2,\cdots ,x_k)=d,\, d\in \mathbb{N}\},
\end{align*}
we have a natural splitting of $A(n)$ into a disjoint union
\[
\bigcup_{d^2\mid n}^{\cdot}A_d(n) =A(n),
\]
which implies that 
\[
\displaystyle\sum_{d^2\mid n}{|A_d(n)|}=|A(n)|.
\]
For any $(x_1,x_2,\cdots, x_k)\in A_d(n)$, The following two equations
\[{\rm gcd}\left(\frac{x_1}{d},\cdots ,\frac{x_k}{d}\right)=1,
\]
\[\displaystyle\sum_{i=1}^{k}{a_i\left(\frac{x_i}{d}\right)^2}=\frac{n}{d^2},
\]
hold. This yields
\[\left(\frac{x_1}{d},\frac{x_2}{d},\cdots, \frac{x_k}{d}\right)\in A_1\left(\frac{n}{d^2}\right).
\]
Conversely, if $(y_1,y_2,\cdots, y_k)\in A_1\left(\frac{n}{d^2}\right)$, then
\[
(dy_1,dy_2,\cdots, dy_k)\in A_d(n).
\]
Hence, there is a bijection between the elements of $A_d(n)$ and $A_1\left(\frac{n}{d^2}\right)$. We conclude that
\[|A_d(n)|=\left|A_1\left(\frac{n}{d^2}\right)\right|=r_{\bm{a}}^{p}\left(\frac{n}{d^2}\right).
\]
Hence, we get the equation
\[r_{\bm{a}}(n)=|A(n)|=\displaystyle\sum_{d^2\mid n}{|A_d(n)|}=\displaystyle\sum_{d^2\mid n}{r_{\bm{a}}^p\left(\frac{n}{d^2}\right)}.
\]
\noindent (2) Now let us prove the second equation by the inclusion-exclusion principle.
Assume that the prime factorization of $n$ is $n=p_1^{\alpha_1}p_2^{\alpha_2}\dots p_t^{\alpha_t}$. Denote
\[B_i=\{(x_1,x_2,\dots,x_k)\in\mathbb{Z}^k:\,p_i\mid gcd(x_1,x_2,\dots,x_k)\}\qquad \forall i\in \{1,2,\dots ,t\}.
\]
By the inclusion-exclusion principle, we get
\begin{align*}
&|B_1\cup B_2\cup \dots \cup B_t|\\
=&\displaystyle\sum_{i}|B_i|-\displaystyle\sum_{i<j}|B_i\cap B_j|+\dots+(-1)^{t-1}|B_1\cap B_2 \cap \dots \cap B_t|\\
=&\displaystyle\sum_{p_i^2\mid n}{r_{\bm{a}}\left(\frac{n}{p_i^2}\right)}-\displaystyle\sum_{p_i^2p_j^2\mid n}{r_{\bm{a}}\left(\frac{n}{p_i^2p_j^2}\right)}+\dots +(-1)^{t-1}\displaystyle\sum_{p_1^2p_2^2\dots p_t^2\mid n}{r_{\bm{a}}\left(\frac{n}{p_1^2p_2^2\dots p_t^2}\right)}.&
\end{align*}
This yields
\begin{multline}
\label{eqn:musum} r_{\bm{a}}^p(n)=r_{\bm{a}}(n)-|B_1\cup B_2\cup \dots \cup B_t|\\
=r_{\bm{a}}(n)-\displaystyle\sum_{p_i^2\mid n}{r_{\bm{a}}\left(\frac{n}{p_i^2}\right)}+\displaystyle\sum_{p_i^2p_j^2\mid n}{r_{\bm{a}}\left(\frac{n}{p_i^2p_j^2}\right)}-\dots +(-1)^{t}\displaystyle\sum_{p_1^2p_2^2\dots p_t^2\mid n}{r_{\bm{a}}\left(\frac{n}{p_1^2p_2^2\dots p_t^2}\right)}\\
=\sum_{S\subseteq\{1,\dots,t\}} (-1)^{\#S} r_{\bm{a}}\left(\frac{n}{\prod_{j\in S} p_j^2}\right).
\end{multline}
Note that for $d^2\mid n$, we have 
\[
\mu(d)=\begin{cases} (-1)^{\#S}&\text{if }d=\prod_{j\in S} p_{j}\ S\subseteq\{1,\dots,t\},\\
0&\text{otherwise},
\end{cases}
\]
so \eqref{eqn:musum} may be rewritten as 
\[
r_{\bm{a}}^p(n)=\sum_{d^2\mid n}\mu(d)r_{\bm{a}}\left(\frac{n}{d^2}\right).
\]
\end{proof}
\begin{re}
{\rm Each equation can be derived from the other one by} Möbius Inversion.\\
~\\
{\rm (a) }Suppose that $r_{\bm{a}}(n)=\displaystyle\sum_{d^2|n}{r^{p}_{\bm{a}}\left(\frac{n}{d^2}\right)}\,$ is given. Then
\begin{align*}
\displaystyle\sum_{d^2\mid n}{\mu(d)r_{\bm{a}}\left(\frac{n}{d^2}\right)}&=\displaystyle\sum_{d^2\mid n}{\mu(d)}\displaystyle\sum_{\ell^2\mid\frac{n}{d^2}}{r_{\bm{a}}^p\left(\frac{n}{(\ell d)^2}\right)}=\displaystyle\sum_{m^2\mid n}{r_{\bm{a}}^p\left(\frac{n}{m^2}\right)}\displaystyle\sum_{d\mid m}{\mu(d)}\\
&=\displaystyle\sum_{m^2\mid n}{r_{\bm{a}}^p\left(\frac{n}{m^2}\right)}I(m)=r_{\bm{a}}^p\left(\frac{n}{1^2}\right)I(1)=r_{\bm{a}}^p(n).&
\end{align*}
{\rm (b) }Suppose that $r_{\bm{a}}^{p}(n)=\displaystyle\sum_{d^2\mid n}{\mu(d)}{r_{\bm{a}}\left(\frac{n}{d^2}\right)}$ is given. Then
\begin{align*}
\displaystyle\sum_{d^2\mid n}{r_{\bm{a}}^p\left(\frac{n}{d^2}\right)}&=\displaystyle\sum_{d^2\mid n}\displaystyle\sum_{\ell^2\mid\frac{n}{d^2}}{\mu(\ell)r_{\bm{a}}\left(\frac{n}{(\ell d)^2}\right)}=\displaystyle\sum_{m^2\mid n}{r_{\bm{a}}\left(\frac{n}{m^2}\right)}\displaystyle\sum_{d\mid m}{\mu(d)}\\
&=\displaystyle\sum_{m^2\mid n}{r_{\bm{a}}\left(\frac{n}{m^2}\right)}I(m)=r_{\bm{a}}\left(\frac{n}{1^2}\right)I(1)=r_{\bm{a}}(n).&
\end{align*}
\end{re}

\section{The proof of Theorem \ref{thm:mainthm}}\label{sec:MainThm}
\begin{proof}[Proof of Theorem \ref{thm:mainthm}]
(1) We first prove the identity
\begin{multline}\label{eqn:one}
\displaystyle\sum_{d^2\mid n}{\mu (d)}\displaystyle\sum_{m\mid\frac{n}{d^2}}{\psi\left(\frac{n}{md^2}\right)\varphi(m)m^h}\\
=\displaystyle\prod_{u_s}{\left(1-\delta_{u_s\mid n}-\delta_{u_s^2\mid n}+\delta_{u_s^3\mid n}\right)}\displaystyle\sum_{d^2\mid n_3}{\mu (d)}\displaystyle\sum_{m\mid\frac{n_3}{d^2}}{\psi\left(\frac{n_3}{md^2}\right)\varphi(m)m^h}.
\end{multline}
Recall that 
\[
n=\displaystyle\prod_{u_s}{u_s^{c_s}}\cdot n_3,
\]
where the product over $u_s$ runs over the prime factors of $\gcd(N,M)$, with $N$ denoting the modulus of $\psi$ and $M$ denoting the modulus of $\varphi$. We prove \eqref{eqn:one} by induction on $\sum_{s}c_s$. If $\sum_{s}c_s=0$, then \eqref{eqn:one} is trivial. Otherwise, there is a prime $u_s\mid n$ and for $d^2\mid n$ and $m\mid \frac{n}{d^2}$ we have 
\[
\mu(d)\psi\left(\frac{n}{md^2}\right)\varphi(m)=0
\]
unless $0\leq \ord_{u_s}(d)\leq 1$, $\ord_{u_s}(m)=0$, and $\ord_{u_s}\left(\frac{n}{d^2}\right)=0$. If $c_s$ is odd or $c_s\geq 3$, then $0\leq \ord_{u_s}(d)\leq 1$, and $\ord_{u_s}\left(\frac{n}{d^2}\right)=0$ cannot simultaneously hold, and the entire sum vanishes; the factor on the right-hand side of \eqref{eqn:one} also vanishes in these cases. Finally, if $c_s=2$, then we must have $\ord_{u_s}(d)=1$ and $m\mid \frac{n}{u_s^2d'}$ for $d'=\frac{d}{u_s}$. Noting that $\mu(d)=-\mu(d')$, the left-hand side of \eqref{eqn:one} then becomes
\[
-\sum_{d'^2\mid \frac{n}{u_s^2}}{\mu (d')}\displaystyle\sum_{m\mid\frac{n}{u_s^2 d'^2}}{\psi\left(\frac{n}{u_s^2 md'^2}\right)\varphi(m)m^h}.
\]
The factor $-1$ in front is precisely $1-\delta_{u_s\mid n}-\delta_{u_s^2\mid n}$ and the remaining sum is the sum for $n\mapsto \frac{n}{u_s^2}$, and \eqref{eqn:one} follows by induction. 

To evaluate the remaining sum on the right-hand side of \eqref{eqn:one}, we next claim that 
\begin{multline}\label{eqn:two}
\displaystyle\sum_{d^2\mid n_3}{\mu (d)}\displaystyle\sum_{m\mid\frac{n_3}{d^2}}{\psi\left(\frac{n_3}{md^2}\right)\varphi(m)m^h}\\
=\displaystyle\prod_{q_j}{\left(1-\delta_{q_j^2\mid n}\overline{\psi(q_j)}^2\right)}\displaystyle\prod_{q_j}{\psi\left(q_j^{\nu_j}\right)}\displaystyle\sum_{d^2\mid n_2}\mu(d)\displaystyle\sum_{m\mid\frac{n_2}{d^2}}{\psi\left(\frac{n_2}{md^2}\right)\varphi(m)m^h},
\end{multline}
where we recall that 
\[ 
n_3=\displaystyle\prod_{q_j}{q_j^{\nu_j}}\cdot n_2,
\]
with $q_j\mid M$ running through the prime factors of $M$ which do not divide $N$. Once again, we have 
\[
\mu(d)\varphi(m)=0
\]
unless $0\leq \ord_{q_j}(d)\leq 1$ and $\ord_{q_j}(m)=0$ for every $j$. The condition $\ord_{q_j}(m)=0$ and the total multiplicativity of $\psi$ and $q_j\nmid N$ gives 
\begin{multline*}
\sum_{d^2\mid n_3}{\mu (d)}\displaystyle\sum_{m\mid\frac{n_3}{d^2}}{\psi\left(\frac{n_3}{md^2}\right)\varphi(m)m^h}\\
=\sum_{d^2\mid n_3}{\mu (d)}\psi(q_j)^{\nu_j-2\ord_{q_j}(d)}\displaystyle\sum_{m\mid\frac{n_3}{q_j^{\nu_j}d^2}}{\psi\left(\frac{n_3}{q_j^{\nu_j-2\ord_{q_j}(d)}md^2}\right)\varphi(m)m^h}.
\end{multline*}
If $q_j^2\nmid n$, then we necessarily have $\ord_{q_j}(d)=0$ and $d^2\mid n_3$ is equivalent to $d^2\mid \frac{n_3}{q_j^{\nu_j}}$, so we are done by induction. If $q_j^2\mid n$, then we write $d=d'q_j^{\alpha}$ for $0\leq \alpha\leq 1$ and simplify (noting that $\mu(d)=(-1)^{\alpha}\mu(d')$)
\[
\psi(q_j)^{\nu_j}\sum_{\alpha=0}^1 (-1)^{\alpha}\psi(q_j)^{-2\alpha}\sum_{d'^2\mid \frac{n_3}{q_j^{\nu_j}}}{\mu (d')}\displaystyle\sum_{m\mid\frac{n_3}{q_j^{\nu_j}d^2}}{\psi\left(\frac{n_3}{q_j^{\nu_j}md'^2}\right)\varphi(m)m^h}.
\]
The sum over $\alpha$ becomes
\[
1-\overline{\psi(q_j)}^2,
\]
and \eqref{eqn:two} follows by induction.

To evaluate the remaining sum in \eqref{eqn:two}, we prove that
\begin{multline}\label{eqn:three}
\displaystyle\sum_{d^2\mid n_2}\mu(d)\displaystyle\sum_{m\mid\frac{n_2}{d^2}}{\psi\left(\frac{n_2}{md^2}\right)\varphi(m)m^h}\\
=\varphi\left(\displaystyle\prod_{p_i}{p_i^{\gamma_i}}\right)\left(\displaystyle\prod_{p_i}{p_i^{\gamma_i}}\right)^h\displaystyle\prod_{p_i^2\mid n}{\left(1-\frac{\overline{\varphi(p_i)}^2}{p_i^{2h}}\right)}\displaystyle\sum_{m\mid n_1}{\psi\left(\frac{n_1}{m}\right)\varphi(m)\,m^h}\displaystyle\prod_{\substack{p^2|\frac{n_1}{m}\\p\,\, prime}}{\left(1-\overline{\psi(p^2)}\right)},
\end{multline}
where we recall that
\[
n_2=\displaystyle\prod_{p_i}{p_i^{\gamma_i}}\cdot n_1
\]
with $p_i$ running over the prime divisors of $N$ which do not divide $M$. Since 
\[
\mu(d)\psi\left(\frac{n_2}{md^2}\right)=0
\]
unless $0\leq \ord_{p_i}(d)\leq 1$ and $\ord_{p_i}(\frac{n_2}{md^2})=0$ for every $i$, we may write $d=d'p_i^{\alpha}$ with $0\leq \alpha\leq \min(\lfloor\frac{\gamma_i}{2}\rfloor,1)$, $d'\mid \frac{n_2}{p_i^{\gamma_i}}$, and 
\[
m= p_i^{\gamma_i-2\alpha} m'
\]
with $m'\mid \frac{n_2}{p_i^{\gamma_i} d'}$. This gives 
\begin{multline*}
\sum_{d^2\mid n_2}\mu(d)\displaystyle\sum_{m\mid\frac{n_2}{d^2}}{\psi\left(\frac{n_2}{md^2}\right)\varphi(m)m^h}\\
=\sum_{\alpha=0}^{\min(\lfloor\frac{\gamma_i}{2}\rfloor,1)}  (-1)^{\alpha}\overline{\varphi(p_i)}^{2\alpha} p_i^{-2\alpha h}\varphi\left(p_i^{\gamma_i}\right)p_i^{h\gamma_i} 
\sum_{d'^2\mid \frac{n_2}{p_i^{\gamma_i}}}\mu(d')\displaystyle\sum_{m'\mid\frac{n_2}{p_i^{\gamma_i}d'^2}}{\psi\left(\frac{n_2}{p_i^{\gamma_i}m'd'^2}\right)\varphi(m')m'^h}.
\end{multline*}
The sum over $\alpha$ simplifies as 
\[
1-\delta_{p_i^2\mid n}\frac{\overline{\varphi(p_i)}^2}{p_i^{2h}}.
\]
By induction, we obtain 
\begin{multline}\label{eqn:threealmost}
\sum_{d^2\mid n_2}\mu(d)\displaystyle\sum_{m\mid\frac{n_2}{d^2}}{\psi\left(\frac{n_2}{md^2}\right)\varphi(m)m^h}\\
=\varphi\left(\prod_{p_i} p_i^{\gamma_i}\right)\left(\prod_{p_i} p_i^{\gamma_i}\right)^h\displaystyle\prod_{p_i^2\mid n}{\left(1-\frac{\overline{\varphi(p_i)}^2}{p_i^{2h}}\right)}\sum_{d\mid n_1}\mu(d)\sum_{m\mid \frac{n_1}{d^2}} \psi\left(\frac{n_1}{md^2}\right)\varphi(m)m^h.
\end{multline}
Using 
\[
\psi\left(\frac{n_1}{md^2}\right)=\psi\left(\frac{n_1}{m}\right) \overline{\psi(d^2)},
\]
we may interchange the sum over $m$ and $d$ to obtain that the remaining double sum in \eqref{eqn:threealmost} becomes
\[
\sum_{m\mid n_1} \psi\left(\frac{n_1}{m}\right)\varphi(m)m^h\sum_{d^2\mid \frac{n_1}{m}} \mu(d)\overline{\psi(d^2)}.
\]
By multiplicativity, we have 
\[
\sum_{d^2\mid \frac{n_1}{m}} \mu(d)\overline{\psi(d^2)}=\prod_{p^2\mid \frac{n_1}{m}}\left(1-\overline{\psi(p^2)}\right),
\]
yielding \eqref{eqn:three}.

Using the fact that $\psi(\frac{n_1}{m})=\psi(n_1)\overline{\psi(m)}$ for $m\mid n_1$ with $\gcd(m_1,N)=1$, we have 
\begin{multline*}
\displaystyle\sum_{m\mid n_1}{\psi\left(\frac{n_1}{m}\right)\varphi(m)m^h}\displaystyle\prod_{\substack{p^2\mid \frac{n_1}{m}\\p\,\, prime}}{\left(1-\overline{\psi(p)}^2\right)}
=\psi(n_1)\sum_{m\mid n_1} \varphi(m)\overline{\psi(m)} m^h \displaystyle\prod_{\substack{p^2\mid \frac{n_1}{m}\\p\,\, prime}}{\left(1-\overline{\psi(p)}^2\right)}\\
=\psi(n_1)\prod_{p\mid n_1} \sum_{j=0}^{\ord_p(n_1)} \left(\varphi(p)\overline{\psi(p)} p^{h}\right)^j\left(1-\delta_{j\leq \ord_p(n_1)-2}\overline{\psi(p)}^2\right),
\end{multline*}
where we used multiplicativity in the last step. Splitting the sum and evaluating the geometric series, the sum over $j$ simplifies as 
\begin{multline}\label{eqn:four}
\sum_{j=0}^{\ord_p(n_1)} \left(\varphi(p)\overline{\psi(p)} p^{h}\right)^j\left(1-\delta_{j\leq \ord_p(n_1)-2}\overline{\psi(p)}^2\right)
\\
=\left(1-\overline{\psi(p)}^2\right)\sum_{j=0}^{\ord_p(n_1)-2} \left(\varphi(p)\overline{\psi(p)} p^{h}\right)^j + \left(\varphi(p)\overline{\psi(p)}p^{h}\right)^{\ord_p(n_1)}\left(1+\frac{\psi(p)\overline{\varphi(p)}}{p^h}\right)\\
=\delta_{\ord_p(n_1)\geq 2}\left(1-\overline{\psi(p)}^2\right)\frac{1-\left(\varphi(p)\overline{\psi(p)}p^h\right)^{\ord_p(n_1)-1}}{1-\varphi(p)\overline{\psi(p)}p^h}+ \left(\varphi(p)\overline{\psi(p)}p^{h}\right)^{\ord_p(n_1)}\left(1+\frac{\psi(p)\overline{\varphi(p)}}{p^h}\right).
\end{multline}
Plugging \eqref{eqn:four} into \eqref{eqn:three}, then plugging this into \eqref{eqn:two}, and finally plugging into \eqref{eqn:one} yields (recalling that $n_1=\prod_{r_k\mid n_1} r_k^{\lambda_k}$ and $n_2=n_1\prod_{p_i} p_i^{\gamma_i}$)
\begin{multline*}
\sum_{d^2\mid n} \mu(d)\sum_{m\mid \frac{n}{d^2}} \psi\left(\frac{n}{md^2}\right)\varphi(m)m^h=\varphi(n_2) n_2^h
\prod_{u_s}\left(1-\delta_{u_s\mid n}-\delta_{u_s^2\mid n} + \delta_{u_s^3\mid n}\right)\\
\times 
\prod_{q_{j}}\left(1-\delta_{q_j^2\mid n} \overline{\psi(q_j)}^2\right)\psi\left(q_j^{\nu_j}\right)\prod_{p_i^2\mid n}\left(1-\frac{\overline{\varphi(p_i)}^2}{p_i^{2h}}\right) \\
\times \prod_{r_k\mid n_1}\left(\frac{\delta_{r_k^2\mid n}\left(1-\overline{\psi(r_k)}^2\right)}{\varphi(r_k^{\lambda_k})\overline{\psi(r_k^{\lambda_k})}r_k^{h\lambda_k}}\cdot \frac{1-\left(\varphi(r_k)\overline{\psi(r_k)} r_k^{h}\right)^{\lambda_k-1}}{1-\varphi(r_k)\overline{\psi(r_k)} r_k^{h}} + \left(1+\frac{\psi(r_k)\overline{\varphi(r_k)}}{r_k^h}\right)\right).
\end{multline*}
This is the claim for generic characters.

\noindent (2) 
In the special case that $\psi$ is a real (quadratic) character, for $p\mid n_1$ we have $1-\overline{\psi(p)}^2=0$, so the product over $r_k\mid n_1$ from part (1) becomes 
\[
\prod_{r_k\mid n_1} \left(1+\frac{\psi(r_k)\overline{\varphi(r_k)}}{r_k^h}\right).
\]
For $p\mid \frac{n}{n_1}$, we have $p\mid NM$, so either $\psi(p)=0$ or $\varphi(p)=0$, and hence 
\[
\prod_{r_k\mid n_1} \left(1+\frac{\psi(r_k)\overline{\varphi(r_k)}}{r_k^h}\right)=\prod_{r_k\mid n} \left(1+\frac{\psi(r_k)\overline{\varphi(r_k)}}{r_k^h}\right).
\]
Plugging in $\psi(q_j)^2=1$ yields the claim.

\end{proof}

\section{Examples}\label{sec:example}
In this section, we consider a number of problems related to representations of integers by quadratic forms where Theorem \ref{thm:mainthm} may be used with both $\psi$ and $\varphi$ real. Using the Kronecker symbol, we denote the real Dirichlet character by $\chi_n(m):=\left(\frac{n}{m}\right)$.
\begin{enumerate}[leftmargin=*,align=left,label={\rm(\arabic*)}]
\item For representations $x_1^2+x_2^2+x_3^2+x_4^2 = n$ as sums of four squares, Jacobi's formula \eqref{eqn:Jacobi} gives 
\[
r_{(1,1,1,1)}(n)=8\cdot\sigma(n)-32\cdot\delta_{4\mid n}\cdot\sigma\left(\frac{n}{4}\right).
\]
By Theorem \ref{thm:relationship}, we have:
\begin{align*}
r^{p}_{(1,1,1,1)}(n)&=\displaystyle\sum_{d^2\mid n}{\mu(d)\cdot r_{(1,1,1,1)}\left(\frac{n}{d^2}\right)}\\
&=8\cdot\displaystyle\sum_{d^2\mid n}{\mu(d)\cdot\displaystyle\sum_{m\mid\frac{n}{d^2}}m}-32\cdot\displaystyle\sum_{d^2|n}{\mu(d)\cdot\delta_{4\mid\frac{n}{d^2}}\cdot\displaystyle\sum_{m\mid\frac{n}{4d^2}}m}.&
\end{align*}
Let $\psi$ and $\varphi$ be $\bm{1}_1$. By Theorem \ref{thm:mainthm} (2), we have
\begin{align*}
&\displaystyle\sum_{d^2\mid n}{\mu(d)\cdot\displaystyle\sum_{m\mid\frac{n}{d^2}}m}=n\cdot\displaystyle\prod_{p}\left(1+\frac{1}{p}\right),\\
&\displaystyle\sum_{d^2\mid n}{\mu(d)\cdot\delta_{4\mid\frac{n}{d^2}}\cdot\displaystyle\sum_{m\mid\frac{n}{4d^2}}m}=\delta_{4\mid n}\cdot\left(\frac{n}{4}\right)\cdot\displaystyle\prod_{p\mid\frac{n}{4}}{\left(1+\frac{1}{p}\right)}.
\end{align*}
Thus
\[r^{p}_{(1,1,1,1)}(n)=8\cdot\left(1+\frac{1}{2}\delta_{2\mid n}-\delta_{4\mid n}-\frac{1}{2}\delta_{8\mid n}\right)\cdot n\displaystyle\prod_{p\ne 2}\left(1+\frac{1}{p}\right).
\]

\item Consider representations $x_1^2+x_2^2+x_3^2+x_4^2+x_5^2+x_6^2= n$ of $n$ as a sum of $6$ squares.\vspace{2ex}\\
Assume that $\bm a = (1,1,1,1,1,1)$ and $n=2^{\alpha}p_1^{\alpha_1}\cdots p_g^{\alpha_g}.$ It is well-known that 
\[r_{\bm a}(n)=16\cdot\displaystyle\sum_{m\mid n}{\chi_{-4}\left(\frac{n}{m}\right)m^2}-4\cdot\displaystyle\sum_{m\mid n}{\chi_{-4}(m)m^2}.
\]
By Theorem \ref{thm:relationship}, we have:
\begin{align*}
r^{p}_{\bm a}(n)&=\displaystyle\sum_{d^2\mid n}{\mu(d)\cdot r_{\bm a}\left(\frac{n}{d^2}\right)}\\
&=16\cdot\displaystyle\sum_{d^2\mid n}{\mu(d)\displaystyle\sum_{m\mid\frac{n}{d^2}}\chi_{-4}\left(\frac{n}{md^2}\right)m^2}-4\cdot\displaystyle\sum_{d^2\mid n}{\mu(d)\displaystyle\sum_{m\mid\frac{n}{d^2}}{\chi_{-4}(m)m^2}}.&
\end{align*}
Let $\varphi$ be $\bm{1}_1$ and $\psi$ be $\chi_{-4}$. In this case we have $n=n_2$ and Theorem \ref{thm:mainthm} (2) gives
\[\displaystyle\sum_{d^2\mid n}{\mu(d)\displaystyle\sum_{m|\frac{n}{d^2}}\chi_{-4}\left(\frac{n}{md^2}\right)m^2}=\left(1-\frac{\delta_{4\mid n}}{16}\right)\cdot n^2\displaystyle\prod_{p\mid n}\left(1+\frac{\chi_{-4}(p)}{p^2}\right).
\]
Let $\psi$ be $\bm{1}_1$ and $\varphi$ be $\chi_{-4}$. In this case, we have $n_2$ is the odd part of $n$, and letting $\alpha=\ord_2(n)$, Theorem \ref{thm:mainthm} (2) gives
\[
\displaystyle\sum_{d^2\mid n}{\mu(d)\displaystyle\sum_{m\mid\frac{n}{d^2}}\chi_{-4}(m)m^2}=\left(1-\delta_{4|n}\right)\chi_{-4}\left(\frac{n}{2^{\alpha}}\right)\cdot \left(\frac{n}{2^{\alpha}}\right)^2\displaystyle\prod_{p\mid n}\left(1+\frac{\chi_{-4}(p)}{p^2}\right).
\]
Then
\[
r^{p}_{\bm a}(n)=d(n)\cdot n^2\displaystyle\prod_{p\mid n}\left(1+\frac{\chi_{-4}(p)}{p^2}\right),
\]
where
\begin{align*}
d(n):=16-\delta_{4\mid n}-4^{1-\alpha}(1-\delta_{4\mid n})\chi_{-4}\left(\frac{n}{2^{\alpha}}\right)=
\begin{cases}
12&\text{if $n$ $\equiv$ $1$ (mod $4$)},\\
20&\text{if $n$ $\equiv$ $3$ (mod $4$)},\\
15&\text{if $n$ $\equiv$ $0$, $2$, $4$ (mod $8$)},\\
17&\text{if $n$ $\equiv$ $6$ (mod $8$)}.\\
\end{cases}
\end{align*}
\end{enumerate}

\section*{Acknowledgements}
The authors thank the referee for helpful comments about the exposition. The second author would like to express thanks to his research fellow Liu Yiming, who provided the formulas for $r_{\bm{a}}(n)$ in the appendix. These formulas helped to give more examples of the application of Theorem \ref{thm:relationship} and Theorem \ref{thm:mainthm}. Examples are given in the appendix below.

\appendix
\section{Some formulas derived by the two theorems}\label{sec:app}
Once obtained numerically, the formulas below directly follow from the valence formula [see \cite[Proposition 8, p.\hskip 0.1cm 115]{koblitz_introduction_1993} for more details], Theorem \ref{thm:eisen} and Theorem \ref{thm:eispace}. We only need to check sufficiently many coefficients to make sure that the formula provided by Liu Yiming is correct. This can be done by simple linear algebra.
\begin{enumerate}[leftmargin=*,align=left,label={\rm(\arabic*)}]
\item Consider representations $\displaystyle\sum_{i=1}^8{x_i^2}=n$ of $n$ as sums of $8$ squares. Then we have
$$r_{(1,1,1,1,1,1,1,1)}(n)=16\cdot\sigma_3(n)-32\cdot\delta_{2\mid n}\cdot\sigma_3\left(\frac{n}{2}\right)+256\cdot\delta_{4\mid n}\cdot\sigma_3\left(\frac{n}{4}\right).$$
In this case, Theorem \ref{thm:mainthm} (2) gives
$$r^{p}_{(1,1,1,1,1,1,1,1)}(n)=\left(16-2\delta_{2\mid n}+\frac{7}{2}\delta_{4\mid n}+\frac{1}{2}\delta_{8\mid n}\right)\cdot n^3\displaystyle\prod_{p\ne 2}{\left(1+\frac{1}{p^3}\right)}.$$

\item Consider representations by the quadratic form $x_1^2+x_2^2+x_3^2+2x_4^2 = n$. The valence formula gives
$$r_{(1,1,1,2)}(n)=-2\cdot\displaystyle\sum_{m\mid n}{\chi_8(m)m}+8\cdot\displaystyle\sum_{m\mid n}{\chi_8\left(\frac{n}{m}\right)m}.$$
Then Theorem \ref{thm:mainthm} (2) yields
$$r_{(1,1,1,2)}^{p}(n)=d_2(n)\cdot n\displaystyle\prod_{p}{\left(1+\frac{\chi_8(p)}{p}\right)},$$
where
$$d_2(n) :=  \left\{ \begin{array}{rcl}
-2\chi_8(n)+8 & \mbox{for} & 2 \nmid n,\vspace{1ex}\\
-\chi_8\left(\frac{n}{2}\right)+8 & \mbox{for} & 2\mid n\mbox{ but }4 \nmid n, \vspace{1ex}\\
6 &\mbox{for} &4\mid n. \end{array}\right.$$

\item Consider representations by the quadratic form {$x_1^2+x_2^2+x_3^2+3x_4^2 = n$}. The valence formula gives
\begin{align*}
r_{(1,1,1,3)}(n)=&-\displaystyle\sum_{m\mid n}{\chi_{12}(m)m}+6\cdot\displaystyle\sum_{m\mid n}{\chi_{12}\left(\frac{n}{m}\right)m}\\
&+3\cdot\displaystyle\sum_{m\mid n}{\chi_{-3}\left(\frac{n}{m}\right)\chi_{-4}(m)m}-2\cdot\displaystyle\sum_{m\mid n}{\chi_{-4}\left(\frac{n}{m}\right)\chi_{-3}(m)m}.
\end{align*}
Assume that $n=2^{\alpha}\,3^{\beta}p^{\alpha_1}_1p^{\alpha_2}_2\cdots p^{\alpha_k}_k$. Then
$$r^{p}_{(1,1,1,3)}(n)=d_3(n)\cdot n\displaystyle\prod_{p}\left(1+\frac{\chi_{12}(p)}{p}\right),$$
where
$$d_3(n) :=  \left\{ \begin{array}{rcl}
\left(2+\frac{1}{2^{\alpha}}\chi_{-3}(2^{\alpha})\chi_{-4}(\frac{n}{2^{\alpha}})\right)\left(3-\frac{1}{3^{\beta}}\chi_{-4}(3^{\beta})\chi_{-3}(\frac{n}{3^{\beta}})\right) & \mbox{for} & \alpha,\beta \in \{0,1\},\vspace{1ex}\\
\frac{3}{2}\left(3-\frac{1}{3^{\beta}}\chi_{-4}(3^{\beta}\right)\chi_{-3}(\frac{n}{3^{\beta}})) & \mbox{for} & \alpha \geqslant 2, \beta \in\{0,1\},\vspace{1ex}\\
\frac{8}{3}\left(2+\frac{1}{2^{\alpha}}\chi_{-3}(2^{\alpha})\chi_{-4}(\frac{n}{2^{\alpha}})\right) &\mbox{for} & \beta\geqslant 2, \alpha\in\{0,1\},\vspace{1ex}\\
4 & \mbox{for} &\alpha \geqslant 2, \beta\geqslant 2.\end{array}\right.$$

\item Consider representations by the quadratic form {$x_1^2+x_2^2+x_3^2+4x_4^2 = n$}. The valence formula gives
$$r_{(1,1,1,4)}(n)=\left(2\cdot\chi_{-4}(n)+4\right)\cdot\sigma(n)-20\cdot\delta_{4\mid n}\cdot\sigma\left(\frac{n}{4}\right)+24\cdot\delta_{8\mid n}\cdot\sigma\left(\frac{n}{8}\right)-32\cdot\delta_{16\mid n}\cdot\sigma\left(\frac{n}{16}\right).$$
Then Theorem \ref{thm:mainthm} gives
$$r^{p}_{(1,1,1,4)}(n)=d_4(n)\cdot n\displaystyle\prod_{p\ne 2}{\left(1+\frac{1}{p}\right)},$$
where
\begin{multline*}
d_4(n):=\left(4+2\cdot\chi_{-4}(n)\right)+2\cdot\delta_{2\mid n}-\left(2\cdot\chi_{-4}(n)+\frac{1}{2}\cdot\chi_{-4}\left(\frac{n}{4}\right)+5\right)\cdot\delta_{4\mid n}\\
+\frac{1}{2}\cdot\delta_{8|n}-\frac{1}{2}\cdot\delta_{16|n}-\delta_{32|n}.
\end{multline*}

\item Consider representations by the quadratic form {$x_1^2+x_2^2+2x_3^2+2x_4^2 = n$}. The valence formula gives
$$r_{(1,1,2,2)}(n)=4\cdot\sigma(n)-4\cdot\delta_{2\mid n}\cdot\sigma\left(\frac{n}{2}\right)+8\cdot\delta_{4\mid n}\cdot\sigma\left(\frac{n}{4}\right)-32\cdot\delta_{8\mid n}\cdot\sigma\left(\frac{n}{8}\right).$$
Then Theorem \ref{thm:mainthm} (2) implies that 
$$r^{p}_{(1,1,2,2)}(n)=\left(4+\delta_{4\mid n}-3\cdot\delta_{8\mid n}-2\cdot\delta_{16\mid n}\right)\cdot n\displaystyle\prod_{p\ne 2}\left(1+\frac{1}{p}\right).$$

\item Consider representations by the quadratic form {$x_1^2+x_2^2+x_3^2+5x_4^2 = n$}. The valence formula gives
\begin{multline*}
r_{(1,1,1,5)}(n)=\displaystyle\sum_{m\mid n}{\chi_5(m)m}-2\cdot\delta_{2\mid n}\cdot\displaystyle\sum_{m\mid \frac{n}{2}}{\chi_5(m)m}-4\cdot\delta_{4|n}\cdot\displaystyle\sum_{m|\frac{n}{4}}{\chi_5(m)m}\\
+5\cdot\displaystyle\sum_{m\mid n}{\chi_5\left(\frac{n}{m}\right)m}+10\cdot\delta_{2\mid n}\cdot\displaystyle\sum_{m\mid\frac{n}{2}}{\chi_5\left(\frac{n}{2m}\right)m}-20\cdot\delta_{4\mid n}\cdot\displaystyle\sum_{m\mid \frac{n}{4}}{\chi_5\left(\frac{n}{4m}\right)m}.
\end{multline*}
Then Theorem \ref{thm:mainthm} (2) implies that 
$$r^{p}_{(1,1,1,5)}(n)=d_6(n)\cdot\left(1+\frac{1}{2}\delta_{2\mid n}-\frac{3}{2}\delta_{4\mid n}+\delta_{8\mid n}\right)\cdot n\displaystyle\prod_{p\ne 2}\left(1+\frac{\chi_5(p)}{p}\right),$$
where
$$d_6(n) :=  \left\{ \begin{array}{rcl}
\chi_5(n)+5 & \mbox{for} & 5 \nmid n,\vspace{1ex}\\
\frac{1}{5}\chi_5\left(\frac{n}{5}\right)+5 & \mbox{for} & 5\mid n\mbox{ but }25 \nmid n,\vspace{1ex}\\
\frac{24}{5} &\mbox{for} &25\mid n. \end{array}\right.$$

\item Consider representations by the quadratic form {$x_1^2+x_2^2+2x_3^2+3x_4^2 = n$}. Then the valence formula gives
\begin{align*}
r_{(1,1,2,3)}(n)=&-\frac{1}{3}\cdot\displaystyle\sum_{m\mid n}{\chi_{24}(m)m}+4\cdot\displaystyle\sum_{m\mid n}{\chi_{24}\left(\frac{n}{m}\right)m}\\
&-\displaystyle\sum_{m\mid n}{\chi_{-3}\left(\frac{n}{m}\right)\chi_{-8}(m)m}+\frac{4}{3}\cdot\displaystyle\sum_{m\mid n}{\chi_{-8}\left(\frac{n}{m}\right)\chi_{-3}(m)m}.
\end{align*}
Assume that $n=2^{\alpha}\,3^{\beta}p^{\alpha_1}_1p^{\alpha_2}_2\cdots p^{\alpha_k}_k$. Then Theorem \ref{thm:mainthm} (2) gives
$$r^{p}_{(1,1,2,3)}(n)=d_7(n)\cdot n\displaystyle\prod_{p}\left(1+\frac{\chi_{24}(p)}{p}\right).$$
where
$$d_7(n) :=  \left\{ \begin{array}{rcl}
\frac{1}{3}\left(4-\frac{1}{2^{\alpha}}\chi_{-3}(2^{\alpha})\chi_{-8}\left(\frac{n}{2^{\alpha}}\right)\right)\left(3+\frac{1}{3^{\beta}}\chi_{-8}(3^{\beta})\chi_{-3}\left(\frac{n}{3^{\beta}}\right)\right)& \mbox{for} & \alpha,\beta \in \{0,1\},\vspace{1ex}\\
\left(3+\frac{1}{3^{\beta }}\chi_{-8}(3^{\beta})\chi_{-3}\left(\frac{n}{3^{\beta}}\right)\right)  & \mbox{for} & \alpha \geqslant 2, \beta \in\{0,1\},\vspace{1ex}\\
\frac{8}{9}\left(4-\frac{1}{2^{\alpha}}\chi_{-3}(2^{\alpha})\chi_{-8}\left(\frac{n}{2^{\alpha}}\right)\right) &\mbox{for} & \beta\geqslant 2, \alpha\in\{0,1\},\vspace{1ex}\\
\frac{8}{3} & \mbox{for} &\alpha \geqslant 2, \beta\geqslant 2.\end{array}\right.$$

\item Consider representations by the quadratic form {$x_1^2+x_2^2+2x_3^2+4x_4^2 = n$}. By the valence formula, we have 
$$r_{(1,1,2,4)}(n)=-2\cdot\delta_{2\mid n}\cdot\displaystyle\sum_{m\mid\frac{n}{2}}{\chi_8(m)m}+4\cdot\displaystyle\sum_{m\mid n}{\chi_8\left(\frac{n}{m}\right)m}.$$
Then Theorem \ref{thm:mainthm} (2) gives 
$$r_{(1,1,2,4)}^{p}(n)=d_8(n)\cdot n\displaystyle\prod_{p}{\left(1+\frac{\chi_8(p)}{p}\right)},$$
where
$$d_8(n) :=  \left\{ \begin{array}{rcl}
4 & \mbox{for} & 2 \nmid n,\vspace{1ex}\\
4-\chi_8\left(\frac{n}{2}\right) & \mbox{for} & 2\mid n\mbox{ but }4 \nmid n, \vspace{1ex}\\
3-\frac{1}{2}\chi_8\left(\frac{n}{4}\right)&\mbox{for} &4\mid n\mbox{ but } 8\nmid n,\vspace{1ex}\\
3  &\mbox{for} & 8\mid n.\end{array}\right.$$

\item Consider representations by the quadratic form {$x_1^2+2x_2^2+2x_3^2+2x_4^2 = n$}. By  the valence formula, we have
$$r_{(1,2,2,2)}(n)=-2\cdot\displaystyle\sum_{m\mid n}{\chi_8(m)m}+4\cdot\displaystyle\sum_{m\mid n}{\chi_8\left(\frac{n}{m}\right)m}.$$
Then Theorem \ref{thm:mainthm} (2), we have 
$$r_{(1,2,2,2)}^{p}(n)=d_9(n)\cdot n\displaystyle\prod_{p}{\left(1+\frac{\chi_8(p)}{p}\right)},$$
where
$$d_9(n) :=  \left\{ \begin{array}{rcl}
-2\chi_8(n)+4 & \mbox{for} & 2 \nmid n,\vspace{1ex}\\
-\chi_8\left(\frac{n}{2}\right)+4 & \mbox{for} & 2\mid n\mbox{ but }4 \nmid n, \vspace{1ex}\\
3 &\mbox{for} &4\mid n. \end{array}\right.$$

\item Consider representations by the quadratic form {$x_1^2+x_2^2+3x_3^2+3x_4^2 = n$}. The valence formula yields
$$r_{(1,1,3,3)}(n)=4\cdot\sigma(n)-8\cdot\delta_{2\mid n}\cdot\sigma\left(\frac{n}{2}\right)-12\cdot\delta_{3\mid n}\cdot\sigma\left(\frac{n}{2}\right)$$
$$+16\cdot\delta_{4\mid n}\cdot\sigma\left(\frac{n}{4}\right)+24\cdot\delta_{6\mid n}\cdot\sigma\left(\frac{n}{6}\right)-48\cdot\delta_{12\mid n}\cdot\sigma\left(\frac{n}{12}\right).$$
Thus Theorem \ref{thm:mainthm} (2) implies htat
$$r^{p}_{(1,1,3,3)}(n)=4\cdot d_{10}(n)\cdot n\displaystyle\prod_{p\ne 2,3}\left(1+\frac{1}{p}\right),$$
where
$$d_{10}(n):=1-\frac{1}{2}\delta_{2|n}-\frac{2}{3}\delta_{3|n}+\frac{1}{2}\delta_{4|n}+\frac{1}{3}\delta_{6|n}+\frac{1}{2}\delta_{8|n}$$
$$-\frac{1}{3}\delta_{9|n}-\frac{1}{3}\delta_{12|n}+\frac{1}{6}\delta_{18|n}-\frac{1}{3}\delta_{24|n}-\frac{1}{6}\delta_{36|n}-\frac{1}{6}\delta_{72|n}.$$

\item Consider representations by the quadratic form {$x_1^2+x_2^2+2x_3^2+6x_4^2 = n$}. By the valence formula, we have 
$$r_{(1,1,2,6)}(n)=-\delta_{2\mid n}\cdot\displaystyle\sum_{m\mid\frac{n}{2}}{\chi_{12}(m)m}+3\cdot\displaystyle\sum_{m\mid n}{\chi_{12}\left(\frac{n}{m}\right)m}$$
$$+3\cdot\delta_{2\mid n}\cdot\displaystyle\sum_{m\mid\frac{n}{2}}{\chi_{-3}\left(\frac{n}{2m}\right)\chi_{-4}(m)m}+\displaystyle\sum_{m\mid n}{\chi_{-4}\left(\frac{n}{m}\right)\chi_{-3}(m)m}.$$
Assume that $n=2^{\alpha}\,3^{\beta}p^{\alpha_1}_1p^{\alpha_2}_2\cdots p^{\alpha_k}_k$. Then, by Theorem \ref{thm:mainthm} (2), we have 
$$r^{p}_{(1,1,2,6)}(n)=d_{11}(n)\cdot n\displaystyle\prod_{p}\left(1+\frac{\chi_{12}(p)}{p}\right),$$
where for $\alpha\in\{0,1,2\}$ and $\beta\in \{0,1\}$ we have 
\[
d_{11}(n):=\left(\left(1-\frac{1}{4}\delta_{4|n}\right)-\delta_{2|n}\frac{1}{2^{\alpha}}\chi_{-3}({2^{\alpha}})\chi_{-4}\left(\frac{n}{2^{\alpha}}\right)\right)\left(3+\frac{1}{3^{\beta}}\chi_{-4}(3^{\beta})\chi_{-3}\left(\frac{n}{3^{\beta}}\right)\right)
\]
and otherwise 
$$d_{11}(n) :=  \left\{ \begin{array}{rcl}
\frac{3}{4}\left(3+\frac{1}{3^{\beta}}\chi_{-4}(3^{\beta})\chi_{-3}\left(\frac{n}{3^{\beta}}\right)\right)& \mbox{for} & \alpha \geqslant 3, \beta \in\{0,1\},\vspace{1ex}\\
\frac{8}{3}\left(\left(1-\frac{1}{4}\delta_{4|n}\right)-\delta_{2|n}\frac{1}{2^{\alpha}}\chi_{-3}({2^{\alpha}})\chi_{-4}\left(\frac{n}{2^{\alpha}}\right)\right) &\mbox{for} & \beta\geqslant 2, \alpha\in\{0,1,2\},\vspace{1ex}\\
2 & \mbox{for} &\alpha \geqslant 3, \beta\geqslant 2.\end{array}\right.$$

\item Consider representations by the quadratic form {$x_1^2+2x_2^2+2x_3^2+3x_4^2 = n$}. The valence formula implies 
\begin{multline*}
r_{(1,2,2,3)}(n)=-\delta_{2\mid n}\cdot\displaystyle\sum_{m\mid\frac{n}{2}}{\chi_{12}(m)m}+3\cdot\displaystyle\sum_{m\mid n}{\chi_{12}\left(\frac{n}{m}\right)m}\\
-3\cdot\delta_{2\mid n}\cdot\displaystyle\sum_{m\mid\frac{n}{2}}{\chi_{-3}\left(\frac{n}{2m}\right)\chi_{-4}(m)m}-\displaystyle\sum_{m\mid n}{\chi_{-4}\left(\frac{n}{m}\right)\chi_{-3}(m)m}.
\end{multline*}
Assume that $n=2^{\alpha}\,3^{\beta}p^{\alpha_1}_1p^{\alpha_2}_2\cdots p^{\alpha_k}_k$. Then, by Theorem \ref{thm:mainthm} (2), we conclude that 
$$r^{p}_{(1,2,2,3)}(n)=d_{12}(n)\cdot n\displaystyle\prod_{p}\left(1+\frac{\chi_{12}(p)}{p}\right),$$
where for $\alpha\in\{0,1,2\}$ and $\beta\in\{0,1\}$ we have 
\[
d_{12}(n):=\left(\left(1-\frac{1}{4}\delta_{4|n}\right)+\delta_{2|n}\frac{1}{2^{\alpha}}\chi_{-3}({2^{\alpha}})\chi_{-4}\left(\frac{n}{2^{\alpha}}\right)\right)\left(3-\frac{1}{3^{\beta}}\chi_{-4}(3^{\beta})\chi_{-3}\left(\frac{n}{3^{\beta}}\right)\right)
\]
and otherwise
$$d_{12}(n) :=  \left\{ \begin{array}{rcl}
\frac{3}{4}\left(3-\frac{1}{3^{\beta}}\chi_{-4}(3^{\beta})\chi_{-3}\left(\frac{n}{3^{\beta}}\right)\right)& \mbox{for} & \alpha \geqslant 3, \beta \in\{0,1\},\vspace{1ex}\\
\frac{8}{3}\left(\left(1-\frac{1}{4}\delta_{4|n}\right)+\delta_{2|n}\frac{1}{2^{\alpha}}\chi_{-3}({2^{\alpha}})\chi_{-4}\left(\frac{n}{2^{\alpha}}\right)\right) &\mbox{for} & \beta\geqslant 2, \alpha\in\{0,1,2\},\vspace{1ex}\\
2 & \mbox{for} &\alpha \geqslant 3, \beta\geqslant 2.\end{array}\right.$$

\item Consider representations by the quadratic form {$x_1^2+2x_2^2+2x_3^2+4x_4^2 = n$}. The valence formula implies that
$$r_{(1,2,2,4)}(n)=2\cdot\sigma(n)-2\cdot\delta_{2|n}\cdot\sigma\left(\frac{n}{2}\right)+8\cdot\delta_{8|n}\cdot\sigma\left(\frac{n}{8}\right)-32\cdot\delta_{16|n}\cdot\sigma\left(\frac{n}{16}\right).$$
Then Theorem \ref{thm:mainthm} (2) implies that 
$$r^{p}_{(1,2,2,4)}(n)=\left(2-\frac{1}{2}\delta_{4|n}+\delta_{8|n}-\frac{3}{2}\delta_{16|n}-\delta_{32|n}\right)\cdot n\displaystyle\prod_{p\ne 2}\left(1+\frac{1}{p}\right).$$

\item Consider representations by the quadratic form{$x_1^2+2x_2^2+2x_3^2+6x_4^2 = n$}. By the valence formula, we have 
\begin{align*}
r_{(1,2,2,6)}(n)=&-\frac{1}{3}\cdot\displaystyle\sum_{m|n}{\chi_{24}(m)m}+2\cdot\displaystyle\sum_{m|n}{\chi_{24}\left(\frac{n}{m}\right)m}\\
&+\displaystyle\sum_{m|n}{\chi_{-3}\left(\frac{n}{m}\right)\chi_{-8}(m)m}-\frac{2}{3}\cdot\displaystyle\sum_{m|n}{\chi_{-8}\left(\frac{n}{m}\right)\chi_{-3}(m)m}.
\end{align*}
Assume that $n=2^{\alpha}\,3^{\beta}p^{\alpha_1}_1p^{\alpha_2}_2\cdots p^{\alpha_k}_k$. Then Theorem \ref{thm:mainthm} (2) implies that 
$$r^{p}_{(1,2,2,6)}(n)=d_{14}(n)\cdot n\displaystyle\prod_{p}\left(1+\frac{\chi_{24}(p)}{p}\right),$$
where
$$
d_{14}(n) :=  \left\{ \begin{array}{rcl}
\frac{1}{3}\left(2+\frac{1}{2^{\alpha}}\chi_{-3}(2^{\alpha})\chi_{-8}\left(\frac{n}{2^{\alpha}}\right)\right)\left(3-\frac{1}{3^{\beta}}\chi_{-8}(3^{\beta})\chi_{-3}\left(\frac{n}{3^{\beta}}\right)\right)& \mbox{for} & \alpha,\beta \in \{0,1\},\vspace{1ex}\\
\frac{1}{2}\left(3-\frac{1}{3^{\beta }}\chi_{-8}(3^{\beta})\chi_{-3}\left(\frac{n}{3^{\beta}}\right)\right)  & \mbox{for} & \alpha \geqslant 2, \beta \in\{0,1\},\vspace{1ex}\\
\frac{8}{9}\left(2+\frac{1}{2^{\alpha}}\chi_{-3}(2^{\alpha})\chi_{-8}\left(\frac{n}{2^{\alpha}}\right)\right) &\mbox{for} & \beta\geqslant 2, \alpha\in\{0,1\},\vspace{1ex}\\
\frac{4}{3} & \mbox{for} &\alpha \geqslant 2, \beta\geqslant 2.\end{array}\right.$$

\item Consider representations by the quadratic form {$x_1^2+2x_2^2+4x_3^2+4x_4^2 = n$}. By the valence formula, we have 
$$r_{(1,2,4,4)}(n)=-2\cdot\delta_{2|n}\cdot\displaystyle\sum_{m|\frac{n}{2}}{\chi_8(m)m}+2\cdot\displaystyle\sum_{m|n}{\chi_8\left(\frac{n}{m}\right)m}.$$
Thus Theorem \ref{thm:mainthm} (2) implies that 
$$r_{(1,2,4,4)}^{p}(n)=d_{15}(n)\cdot n\displaystyle\prod_{p}{\left(1+\frac{\chi_8(p)}{p}\right)},$$
where
$$d_{15}(n) :=  \left\{ \begin{array}{rcl}
2 & \mbox{for} & 2 \nmid n,\vspace{1ex}\\
2-\chi_8\left(\frac{n}{2}\right) & \mbox{for} & 2\mid n\mbox{ but }4 \nmid n, \vspace{1ex}\\
\frac{3}{2}-\frac{1}{2}\chi_8\left(\frac{n}{4}\right)&\mbox{for} &4\mid n\mbox{ but } 8\nmid n,\vspace{1ex}\\
\frac{3}{2}  &\mbox{for} & 8\mid n.\end{array}\right.$$

\item Consider representations by the quadratic form {$x_1^2+2x_2^2+4x_3^2+6x_4^2 = n$}. By the valence formula, we have
\begin{multline*}
r_{(1,2,4,6)}(n)=-\delta_{4|n}\cdot\displaystyle\sum_{m|\frac{n}{4}}{\chi_{12}(m)m}+\frac{3}{2}\cdot\displaystyle\sum_{m|n}{\chi_{12}\left(\frac{n}{m}\right)m}\\
-3\cdot\delta_{4|n}\cdot\displaystyle\sum_{m|\frac{n}{4}}{\chi_{-3}\left(\frac{n}{4m}\right)\chi_{-4}(m)m}+\frac{1}{2}\cdot\displaystyle\sum_{m|n}{\chi_{-4}\left(\frac{n}{m}\right)\chi_{-3}(m)m}.
\end{multline*}
Assume that $n=2^{\alpha}\,3^{\beta}p^{\alpha_1}_1p^{\alpha_2}_2\cdots p^{\alpha_k}_k$. Then
$$r^{p}_{(1,2,4,6)}(n)=d_{16}(n)\cdot n\displaystyle\prod_{p}\left(1+\frac{\chi_{12}(p)}{p}\right),$$
where for $\alpha\in \{0,1,2,3\}$ and $\beta\in\{0,1\}$ we have 
\[
d_{16}(n):=\left(\frac{1}{2}-\frac{1}{8}\delta_{4|n}-\delta_{4|n}\frac{1}{2^{\alpha}}\chi_{-3}(2^{\alpha})\chi_{-4}\left(\frac{n}{2^{\alpha}}\right)\right)\left(3+\frac{1}{3^{\beta}}\chi_{-4}(3^{\beta})\chi_{-3}\left(\frac{n}{3^{\beta}}\right)\right)
\]
and otherwise
$$d_{16}(n) :=  \left\{ \begin{array}{rcl}
\frac{3}{8}\left(3+\frac{1}{3^{\beta}}\chi_{-4}(3^{\beta})\chi_{-3}\left(\frac{n}{3^{\beta}}\right)\right)& \mbox{for} & \alpha \geqslant 4, \beta \in\{0,1\}\vspace{1ex},\\
\frac{8}{3}\left(\frac{1}{2}-\frac{1}{8}\delta_{4|n}-\delta_{4|n}\frac{1}{2^{\alpha}}\chi_{-3}({2^{\alpha}})\chi_{-4}\left(\frac{n}{2^{\alpha}}\right)\right) &\mbox{for} & \beta\geqslant 2, \alpha\in\{0,1,2,3\},\vspace{1ex}\\
1 & \mbox{for} &\alpha \geqslant 4, \beta\geqslant 2 .\end{array}\right.$$
\end{enumerate}

\end{document}